\documentclass{amsart}
\usepackage[all]{xy}
\usepackage{amsmath,amssymb,dsfont,mathtools,enumitem,nicefrac}
\usepackage[pdftitle={Localisation and colocalisation of KK-theory at sets of primes},
  pdfauthor={Hvedri Inassaridze, Tamaz Kandelaki, Ralf Meyer},
  pdfsubject={Mathematics}
]{hyperref}
\usepackage[lite]{amsrefs}
\usepackage{microtype}

% commands for links in the bibliography
\newcommand*{\MRref}[2]{ \href{http://www.ams.org/mathscinet-getitem?mr=#1}{MR \textbf{#1}}}
\newcommand*{\arxiv}[1]{\href{http://www.arxiv.org/abs/#1}{arXiv: #1}}

\newtheorem{thm}{Theorem}[section]
\newtheorem{cor}[thm]{Corollary}
\newtheorem{lem}[thm]{Lemma}
\newtheorem{prop}[thm]{Proposition}
\theoremstyle{definition}
\newtheorem{defn}[thm]{Definition}
\theoremstyle{remark}
\newtheorem{rem}[thm]{Remark}
\newtheorem{example}[thm]{Example}

\numberwithin{equation}{section}

\DeclareMathOperator{\Rep}{Rep}%representation category
\DeclareMathOperator{\Hom}{Hom}%group homomorphisms
%general linear group
%classifying space of \GL
%conjugation action
\DeclareMathOperator{\Tor}{Tor}%Torsion group
\DeclareMathOperator{\coker}{coker}%cokernel

\newcommand*{\N}{\mathbb N}%natural numbers
\newcommand*{\Z}{\mathbb Z}%integers
\newcommand*{\Q}{\mathbb Q}%rational numbers
\newcommand*{\R}{\mathbb R}%real numbers
\newcommand*{\C}{\mathbb C}%complex numbers
\newcommand*{\Sphere}{\mathbb S}%sphere

\newcommand*{\Zen}{R}%commutative unital ring
\newcommand*{\ZenS}{S^{-1}R}%localisation of \Zen at S
\newcommand*{\Tri}{\mathcal T}%triangulated category
\newcommand*{\TriS}{S^{-1}\Tri}%localisation of \Tri at S
\newcommand*{\FinS}{\mathcal N_S}%S-finite objects
\newcommand*{\KKcat}{\mathcal{KK}}%KK-theory as a category
\newcommand*{\Cat}{\mathcal C}%category
\newcommand*{\Cats}{\mathcal C_S}%category associated to a multiplicatively closed subset S
\newcommand*{\prid}{\mathfrak p}%primitive ideal
\newcommand*{\Unit}{\mathds 1}%tensor unit
%left localisation
%right localisation
%left colocalisation functor
\newcommand*{\Rightco}{\mathbb R^\bot}%right colocalisation functor

\newcommand*{\inOb}{\mathrel{\in\in}\nobreak}%relation for objects of categories

\newcommand*{\Ab}{\mathfrak{Ab}}%category of Abelian groups
%category of C*-algebras

\newcommand*{\Cst}{\textup{C}^*}%C*-algebra
\newcommand*{\K}{\textup K}%K-theory
\newcommand*{\KO}{\textup{KO}}%real K-theory
\newcommand*{\Cont}{\textup C}%continuous functions
\newcommand*{\topo}{\textup{top}}%topological
\newcommand*{\red}{\textup{r}}%reduced
\newcommand*{\KK}{\textup{KK}}%KK-theory
\newcommand*{\KKO}{\textup{KKO}}%KK-theory for real C*-algebras
\newcommand*{\id}{\textup{id}}%identity map

\newcommand*{\nb}{\nobreakdash}%no word break after the following hyphen
\newcommand*{\defeq}{\mathrel{\vcentcolon=}}%used for definitions
\newcommand*{\mono}{\rightarrowtail}%monomorphism
\newcommand*{\epi}{\twoheadrightarrow}%epimorphism
\newcommand*{\blank}{\textup{\textvisiblespace}}

\begin{document}
\title[Localisation and colocalisation]{Localisation and colocalisation of KK-theory\\ at sets of primes}
\author{Hvedri Inassaridze}
\author{Tamaz Kandelaki}
\address{H. Inassaridze, T.Kandelaki: A.~Razmadze Mathematical Institute, M.~Alexidze Street~1, 380093 Tbilisi, Georgia}
\email{inassari@gmail.com}
\email{tam.kandel@gmail.com}
\author{Ralf Meyer}
\address{Ralf Meyer: Mathematisches Institut and\\Courant Centre ``Higher order structures'', Georg-August Universit\"at G\"ottingen, Bunsenstra{\ss}e 3--5, 37073 G\"ottingen, Germany}
\email{rameyer@uni-math.gwdg.de}

\thanks{This research was supported by the Volkswagen Foundation (Georgian--German non-commutative partnership).  The third author was supported by the German Research Foundation (Deutsche Forschungsgemeinschaft (DFG)) through the Institutional Strategy of the University of G\"ottingen.}
\subjclass[2000]{19K99, 19K35, 19D55}
%\keywords{}

\begin{abstract}
  Given a set of prime numbers~\(S\), we localise equivariant bivariant Kasparov theory at~\(S\) and compare this localisation with Kasparov theory by an exact sequence.  More precisely, we define the localisation at~\(S\) to be \(\KK^G(A,B) \otimes \Z[S^{-1}]\).  We study the properties of the resulting variants of Kasparov theory.
\end{abstract}

\maketitle

\section{Introduction}
\label{sec:intro}

Localisation at a prime number is a standard technique for computations in stable homotopy theory.  We consider a more general situation here.  Let~\(\Zen\) be a commutative ring and let~\(\Tri\) be an \(\Zen\)\nb-linear triangulated category, that is, the morphism spaces in~\(\Tri\) are enriched to \(\Zen\)\nb-modules and the composition in~\(\Tri\) is \(\Zen\)\nb-bilinear.  Let \(S\subseteq\Zen_\Tri\setminus\{0\}\) be a multiplicatively closed subset.  Let \(\ZenS\) be the localisation of the ring~\(\Zen\) at~\(S\).  We define the localisation of~\(\Tri\) at~\(S\) to be the category~\(\Tri[S^{-1}]\) with morphism spaces
\[
\Tri[S^{-1}](A,B)
\defeq \Tri(A,B) \otimes_{\Zen} \ZenS.
\]
This yields again a triangulated category because \(\ZenS\) is a flat module over~\(\Zen\).  Equivalently, we may describe~\(\Tri[S^{-1}]\) as a localisation of~\(\Tri\) at a certain thick subcategory.  Very similar results are due to Paul Balmer~\cite{Balmer:Spectra_spectra} (see \cite{Balmer:Spectra_spectra}*{Theorem 3.6} and \cite{dellAmbroglio:Tensor_triangular}*{Theorem 2.32}).

For instance, let \(\Zen\defeq \Rep(G)\) be the representation ring of a compact Lie group~\(G\) and let~\(\Tri\) be the equivariant Kasparov category~\(\KKcat^G\).  The ring~\(\Rep(G)\) is always a commutative Noetherean ring, its prime ideals are described in~\cite{Segal:Representation_ring}.  Let~\(\hat{S}\) be a set of prime ideals of~\(\Rep(G)\) and let
\[
S\defeq \Rep(G) \bigm\backslash \bigcup_{\prid\notin \hat{S}} \prid
\]
be the associated multiplicatively closed subset of elements that are invertible outside~\(S\).  The resulting localisation of~\(\KK^G\) at~\(S\) is used, for instance, in~\cite{dellAmbroglio:Tensor_triangular}.

A particularly simple example is rational \(\KK^G\)-theory.  Here we take \(\Zen=\Z\) and \(S=\Z\setminus\{0\}\), so that \(\Z[S^{-1}]=\Q\).  The localisation of \(\KK^G\) at~\(S\) has morphism spaces
\[
\KK^G(A,B;\Q) \defeq \KK^G(A,B) \otimes \Q.
\]
This differs from the definition of Kasparov theory with coefficients in \cite{Blackadar:K-theory}*{Exercise 23.15.6}.  Our definition has several advantages.  Most importantly, the category defined by the groups \(\KK^G(A,B;\Q)\) is again triangulated.

For trivial~\(G\) and \(A=\C\), we get \(\K\)\nb-theory with coefficients in~\(S^{-1}\Z\),
\[
\K_*(A;S^{-1}\Z) \defeq \K_*(A)\otimes S^{-1}\Z;
\]
here our definition is equivalent to the one in \cite{Blackadar:K-theory}*{Exercise 23.15.6}.  \(\K\)\nb-theory with coefficients in~\(\Q\) is particularly important because the Chern character identifies \(\K^*(X)\otimes\Q\) for a compact space~\(X\) with the rational cohomology of~\(X\).

We return to the general case of a localisation of an \(\Zen\)\nb-linear triangulated category~\(\Tri\) at a multiplicatively closed subset~\(S\) of~\(\Zen\).  The embedding \(\Zen\to\ZenS\) induces a canonical map from~\(\Tri\) to~\(\Tri[S^{-1}]\).  Since~\(\Tri[S^{-1}]\) is a localisation of~\(\Tri\), the construction in~\cite{Inassaridze-Kandelaki-Meyer:Localisation_colocalisation} provides a natural long exact sequence
\begin{multline}
  \label{excmain}
  \dotsb \to \Tri_0(A,B;\ZenS/\Zen)
  \to \Tri_0(A,B)
  \to \Tri_0(A,B;\ZenS)
  \\ \to \Tri_{-1}(A,B;\ZenS/\Zen)
  \to \dotsb.
\end{multline}
The groups \(\Tri(A,B;\ZenS/\Zen)\) behave like the morphism spaces in a triangulated category, except that they lack unit morphisms.

For instance, assume \(\Tri=\KKcat\), \(\Zen=\Z\), \(S=\Z\setminus\{0\}\), and \(A=\C\).  Then the torsion theory \(\KK_*(\C,B;S^{-1}\Z/\Z)\) agrees with the \(\K\)\nb-theory of~\(B\) with \(S^{-1}\Z/\Z\)-coefficients, and the long exact sequence~\eqref{excmain} already appears in \cite{Cuntz-Meyer-Rosenberg}*{Section 8.1}.

The definition in~\cite{Inassaridze-Kandelaki-Meyer:Localisation_colocalisation} is not useful to actually compute \(\Tri(A,B;\ZenS/\Zen)\).  To address this problem, recall that
\[
\ZenS/\Zen
\cong \varinjlim x^{-1}\Zen/\Zen
= \varinjlim \Zen/(x),
\]
where \((x)=x\cdot\Zen \cong\Zen\) is the principal ideal generated by~\(x\).  Hence we expect that \(\Tri(A,B;\ZenS/\Zen)\) is a colimit of theories \(\Tri(A,B;s)\) ``with finite coefficients.''  We make this precise below.  Finite coefficient theories on \(\Cst\)\nb-algebras are already considered in~\cite{Schochet:Top4}.

The examples we consider in this article only involve the simple special case \(\Zen=\Z\).  We work in the more general situation described above because the definitions and proofs are all literally the same as in the special case \(\Zen=\Z\).

The exact sequence~\eqref{excmain} allows us to split a problem concerning~\(\Tri\) into two problems concerning \(\Tri[S^{-1}]\) and \(\Tri(A,B;\ZenS/\Zen)\).  The first of these two may be considerably simpler, for suitable choice of~\(S\); the latter involves only torsion \(\Zen\)\nb-modules with the possible torsion controlled by~\(S\).

As an illustration, we consider the Baum--Connes assembly map with coefficients.

Another example is the comparison between real and complex
Kasparov theory.  Let \(A\) and~\(B\) be real \(\Cst\)\nb-algebras
and let \(A_\C\defeq A\otimes_\R\C\), \(B_\C\defeq
B\otimes_\R\C\), then the results in~\cite{Schick:Real_complex}
yield a natural isomorphism
\[
\KK_n^G(A_\C,B_\C;\Z[1/2]) \cong
\KKO_n^G(A,B;\Z[1/2]) \oplus \KKO_{n-2}^G(A,B;\Z[1/2]).
\]

We plan in a forthcoming article to study the situation where~\(G\) is a finite group and~\(S\) is the set of prime numbers dividing the order of~\(G\).  It seems plausible that there should be a tractable Universal Coefficient Theorem computing \(\KK^G_*(A,B;S^{-1}\Z)\) for this choice of~\(S\) for a large class of groups.  Then the long exact sequence~\eqref{excmain} reduces the computation of \(\KK^G_*(A,B)\) to that of the torsion theory \(\KK^G_*(A,B;S^{-1}\Z/\Z)\).

Finally, we consider the exponent of \(\KK^G_*(A,B;\Z/q)\) for \(q\in\N_{\ge2}\).  For complex \(\Cst\)\nb-algebras, it is easy to see that \(q\cdot \KK^G_*(A,B;\Z/q)=\{0\}\) for all~\(q\).  In the real case, we show \(q\cdot \KKO^G_*(A,B;\Z/q)=\{0\}\) for odd~\(q\) and \(2q\cdot \KKO^G_*(A,B;\Z/q)=\{0\}\) for even~\(q\).  We remark without proof that the latter easy result is not optimal: if \(4\mid q\), then \(q\cdot \KKO^G_*(A,B;\Z/q)=\{0\}\) as well, by an analogue of a Theorem by Browder, Karoubi and Lambre for algebraic \(\K\)\nb-theory (see \cites{Browder:Algebraic_K_coefficients, Karoubi-Lambre:Classes_caracteristiques}).

\section{Central localisation and colocalisation}
\label{sec:central_localisation}

Central localisation in the setting of tensor triangulated categories is already studied by Paul Balmer~\cite{Balmer:Spectra_spectra} and Ivo dell'Ambroglio~\cite{dellAmbroglio:Tensor_triangular}.  Here we define it using a tensor product, and we check that two natural definitions are equivalent.  One of them shows that the localisation is again a triangulated category.

Following~\cite{Inassaridze-Kandelaki-Meyer:Localisation_colocalisation}, we then introduce central \emph{co}localisations and describe them more explicitly.  We also consider homology with finite coefficients, assuming the naturality of certain cones.

Let~\(\Zen\) be a commutative unital ring and let~\(S\) be a multiplicatively closed subset of~\(\Zen\).  Let~\(\ZenS\) denote the localisation of~\(\Zen\) at~\(S\) (see~\cite{Atiyah-Macdonald:Commutative}).  This is a unital ring equipped with a natural unital ring homomorphism \(i_S\colon \Zen\to\ZenS\).  The following constructions only depend on this ring extension and not on the choice of~\(S\).

Let~\(\Tri\) be an \(\Zen\)\nb-linear triangulated category, that is, each morphism space in~\(\Tri\) is an \(\Zen\)\nb-module and composition of morphisms is \(\Zen\)\nb-linear.

\subsection{The localisation}
\label{sec:localisation}

\begin{defn}
  \label{def:localisation}
  The \emph{localisation} of~\(\Tri\) at~\(S\) is the \(\ZenS\)\nb-linear additive category~\(\TriS\) with morphism spaces
  \[
  \TriS(A,B) \defeq \Tri(A,B) \otimes_\Zen \ZenS
  \]
  and the obvious composition.  The natural map \(i_S\colon \Zen\to\ZenS\) induces an \(\Zen\)\nb-linear functor \(\Tri\to\TriS\).
\end{defn}

\begin{example}
  \label{exa:integers}
  Any triangulated category is \(\Z\)\nb-linear.  Let~\(S_0\) be a set of prime numbers and let~\(S\) be the set of all natural numbers whose prime factor decomposition contains only primes in~\(S_0\).  This yields a localisation at the set~\(S_0\) of prime numbers.  In particular, we may localise at a single prime number.

  For instance, if~\(S_0\) is the set of all primes, then \(S^{-1}\Z=\Q\) and~\(\TriS\) is a rational version of~\(\Tri\).
\end{example}

\begin{example}
  \label{exa:centre}
  Recall that the centre~\(Z_\Cat\) of an additive category~\(\Cat\) is the set of natural transformations \(\id_\Cat\Rightarrow\id_\Cat\).  That is, an element of the centre is a family of maps \(\phi_x\colon x\to x\) for all objects~\(x\) that is central in the sense that \(f\circ \phi_x=\phi_y\circ f\) for all morphisms \(f\colon x\to y\).  The centre is a commutative unital ring in a natural way, and~\(\Cat\) is \(Z_\Cat\)\nb-linear.  An \(\Zen\)\nb-linear structure on~\(\Cat\) is equivalent to a unital ring homomorphism from~\(\Zen\) to~\(Z_\Cat\).
\end{example}

\begin{example}
  \label{exa:KKG_RepG}
  Let~\(G\) be a compact metrisable group and let \(\Zen = \Rep(G)\) be its representation ring.  Let \(\KKcat^G\) denote the \(G\)\nb-equivariant Kasparov category.  Recall that an object of \(\KKcat^G\) is a separable \(\Cst\)\nb-algebras with a strongly continuous action of~\(G\); the morphism spaces in \(\KKcat^G\) are the \(G\)\nb-equivariant bivariant \(\K\)\nb-groups \(\KK^G(A,B)\) defined by Gennadi Kasparov~\cite{Kasparov:Operator_K}.  The ring \(\Rep(G)\) is isomorphic to \(\KK^G(\C,\C)\) and acts on \(\KK^G(A,B)\) by exterior product.  Thus~\(\KKcat^G\) is \(\Rep(G)\)-linear.

  It is conceivable that the centre of~\(\KKcat^G\) is isomorphic to \(\Rep(G)\), but this seems difficult to prove, even for the trivial group~\(G\), because we know very little about the algebraic properties of \(\KKcat\) beyond the bootstrap category.
\end{example}

\begin{example}
  \label{exa:tensor_triangulated}
  Let~\(\Tri\) be a tensor triangulated category with tensor unit~\(\Unit\).  Then \(\Zen\defeq \Tri(\Unit,\Unit)\) is a commutative ring that acts on \(\Tri(A,B)\) for all \(A,B\) by exterior product, so that~\(\Tri\) becomes \(\Zen\)\nb-linear.  This situation is considered in \cites{Balmer:Spectra_spectra,dellAmbroglio:Tensor_triangular} and contains Example~\ref{exa:KKG_RepG} as a special case.  The following results generalise \cite{Balmer:Spectra_spectra}*{Theorem 3.6} and \cite{dellAmbroglio:Tensor_triangular}*{Theorem 2.32} for this special case.
\end{example}

Our next goal is to realise~\(\TriS\) as a localisation of~\(\Tri\).  This will also show that~\(\TriS\) inherits from~\(\Tri\) a triangulated category structure.  As a preparation, we describe~\(\ZenS\) as a filtered colimit.

\begin{defn}
  \label{def:Cats}
  Let~\(\Cats\) be the category whose object set is~\(S\) and whose morphism space from~\(s\) to~\(t\) is the set of all \(u\in S\) with \(su=t\).  The composition of morphisms in~\(\Cats\) is the multiplication in~\(S\).
\end{defn}

\begin{lem}
  \label{lem:localisation_as_colimit}
  The category~\(\Cats\) is filtered.

  Map \(s\in S\) to the rank-1 free \(\Zen\)\nb-module~\(\Zen\) and the morphism \(u\colon s\to t\) in~\(\Cats\) to the map \(\Zen\to\Zen\), \(r\mapsto u\cdot r\).  This defines an inductive system of \(\Zen\)\nb-modules with colimit~\(\ZenS\).  Let~\(M\) be an \(\Zen\)\nb-module.  If we map \(s\in S\) to~\(M\) and the morphism \(u\colon s\to t\) to the map \(M\to M\), \(f\mapsto u\cdot f\), then we get an inductive system with colimit \(S^{-1}M = \ZenS\otimes_\Zen M\).
\end{lem}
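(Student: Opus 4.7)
The plan is to verify the two axioms of a filtered category for~$\Cats$ and then to identify the colimit of the displayed inductive system with~$\ZenS$ via the classical construction of localisation as a set of fractions.

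For filteredness, any two objects $s, t \in S$ admit $st \in S$ as a common target, with morphisms $t\colon s \to st$ and $s\colon t \to st$. Given two parallel morphisms $u, v\colon s \to t$ (so that $su = sv = t$ in~$\Zen$), the element~$s$ itself, viewed as a morphism $s\colon t \to st$, equalises $u$ and~$v$: we have $s \cdot u = su = t = sv = s \cdot v$ as elements of~$S$. This step is the main subtlety, because when~$\Zen$ has zero divisors we cannot cancel~$s$ to conclude $u = v$; the trick is to multiply by~$s$ on the target side rather than on the source side.

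For the colimit, I would define a natural map $\psi$ from the colimit to~$\ZenS$ by sending the class of an element $r \in \Zen$ placed at index~$s$ to the fraction $r/s \in \ZenS$. Well\nb-definedness reduces to the identity $ur/t = r/s$ whenever $su = t$; surjectivity is immediate since every element of~$\ZenS$ has this form. For injectivity, if $r/s = r'/s'$ in~$\ZenS$, then the defining relation of a localisation supplies $t \in S$ with $t(s'r - sr') = 0$, and the morphisms $s't\colon s \to ss't$ and $st\colon s' \to ss't$ then exhibit the two representatives as equal in the colimit at the object~$ss't$. For a general \(\Zen\)\nb-module~$M$, the same argument works verbatim; alternatively, since filtered colimits commute with tensor products, the statement for~$M$ follows from the case $M = \Zen$ by tensoring the inductive system with~$M$ over~$\Zen$.
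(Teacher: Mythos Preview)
Your proof is correct and follows essentially the same approach as the paper. Your filteredness argument is identical to the paper's (common target~$st$, equaliser~$s\colon t\to st$), and your identification of the colimit with~$\ZenS$ via the map $(r,s)\mapsto r/s$ is just a rephrasing of the paper's observation that the equivalence relation defining the filtered colimit coincides with the standard fraction description of~$S^{-1}M$.
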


\begin{proof}
  If \(s\) and~\(t\) are objects of~\(\Cats\), then the object~\(st\) dominates both because of the morphisms \(t\colon s\to st\) and \(s\colon t\to st\).  Two morphisms \(u,u'\colon s\rightrightarrows t\) are equalised by \(s\colon t\to st\) because \(su=t=su'\) if \(u,u'\) both map~\(s\) to~\(t\).  Thus~\(\Cats\) is directed.

  It is clear that our prescriptions above yield inductive systems indexed by~\(\Cats\).  Elements of the colimit of this system are equivalence classes of pairs \((a,s)\) with \(a\in\Zen\) (or \(a\in M\)), \(s\in S\), and \((a,s)\) and \((b,t)\) represent the same element in the colimit if there exist \(s',t'\in S\) with \(ss'=tt'\) and \(as'=bt'\).  This equivalence relation on \(M\times S\) for an \(\Zen\)\nb-module~\(M\) describes the localisation~\(S^{-1}M\) (see~\cite{Atiyah-Macdonald:Commutative}).
\end{proof}

\begin{defn}
  \label{def:S-equivalence}
  A morphism~\(f\) in~\(\Tri(A,B)\) is called an \emph{\(S\)\nb-equivalence} if there are \(g,h\in\Tri(B,A)\) and \(s,t\in S\) such that \(g\circ f=s\cdot\id_A\) and \(f\circ h=t\cdot\id_B\).
\end{defn}

\begin{lem}
  \label{lem:S-equivalence}
  A morphism~\(f\) in~\(\Tri(A,B)\) is an \emph{\(S\)\nb-equivalence} if and only if there are \(g\in\Tri(B,A)\) and \(s\in S\) with \(g\circ f=s\cdot\id_A\) and \(f\circ g=s\cdot\id_B\).
\end{lem}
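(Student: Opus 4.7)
The plan is to prove both directions. The "if" direction is immediate: if $g$ and~$s$ satisfy $g\circ f = s\id_A$ and $f\circ g = s\id_B$, then $h \defeq g$ and $t \defeq s$ witness $f$ as an $S$\nobreakdash-equivalence in the sense of Definition~\ref{def:S-equivalence}.

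For the "only if" direction, suppose we are given $g,h\in\Tri(B,A)$ and $s,t\in S$ with $g\circ f = s\cdot\id_A$ and $f\circ h = t\cdot\id_B$. The key observation is that we can compute the threefold composite $g\circ f\circ h$ by associativity in two ways:
\[
sh = (s\cdot\id_A)\circ h = (g\circ f)\circ h = g\circ(f\circ h) = g\circ(t\cdot\id_B) = tg.
\]
This symmetric equality $sh = tg$ in $\Tri(B,A)$ is what allows us to combine~$g$ and~$h$ into a single two-sided inverse up to scalar.

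Now define $g' \defeq tg = sh$ and $s' \defeq st$; note $s'\in S$ because~$S$ is multiplicatively closed. Then
\[
g' \circ f = (tg)\circ f = t\cdot(g\circ f) = ts\cdot\id_A = s'\cdot\id_A,
\]
\[
f \circ g' = f\circ(sh) = s\cdot(f\circ h) = st\cdot\id_B = s'\cdot\id_B,
\]
which are exactly the required identities. There is no serious obstacle here: once one writes out $g\circ f\circ h$ in two associations, the construction of $(g',s')$ is forced, and commutativity of~$\Zen$ is used only implicitly through the fact that scalar multiplication by elements of~$\Zen$ commutes with composition in the $\Zen$\nb-linear category~$\Tri$.
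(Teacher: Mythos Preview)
Your proof is correct and follows essentially the same approach as the paper: compute \(g\circ f\circ h\) two ways to obtain \(tg = sh\), then take this common value as the new \(g\) with scalar \(st\in S\). The paper's argument is terser and omits the trivial ``if'' direction, but the mathematical content is identical.
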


\begin{proof}
  If \(g,h,s,t\) are as in Definition~\ref{def:S-equivalence}, then
  \[
  tg = g\circ f\circ h = sh,\qquad
  (tg)\circ f = st\cdot \id_A,\qquad
  f\circ (sh) = st\cdot \id_B.
  \]
  Thus \(tg=sh\in\Tri(B,A)\) and \(st\in S\) will do.
\end{proof}

\begin{defn}
  \label{def:S-finite}
  An object~\(A\) of~\(\Tri\) is called \emph{\(S\)\nb-finite} if \(s\cdot \id_A=0\) for some \(s\in S\).
\end{defn}

\begin{prop}
  \label{pro:S-finite-equivalence}
  Let \(A\xrightarrow{f} B \to C\to A[1]\) be an exact triangle in~\(\Tri\).  Then the following are equivalent:
  \begin{enumerate}[label=\textup{(\arabic{*})}]
  \item the morphism~\(f\) is an \(S\)\nb-equivalence;
  \item the morphism~\(f\) becomes invertible in~\(\TriS\);
  \item the object~\(C\) is \(S\)\nb-finite;
  \item the object~\(C\) becomes a zero object in~\(\TriS\).
  \end{enumerate}
\end{prop}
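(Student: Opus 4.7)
The plan is to prove the equivalences via the cycle \(1 \Leftrightarrow 2 \Leftrightarrow 4 \Leftrightarrow 3\), using the filtered colimit description of \(\ZenS\)-tensoring from Lemma~\ref{lem:localisation_as_colimit} to pass back and forth between equalities in \(\TriS\) and equalities in~\(\Tri\) holding up to multiplication by some \(s\in S\).

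For (1) \(\Leftrightarrow\) (2), I would start from Lemma~\ref{lem:S-equivalence} to put an \(S\)-equivalence in the form \(gf=s\cdot\id_A\), \(fg=s\cdot\id_B\); then the class of~\(g\) divided by~\(s\) is a two-sided inverse of~\(f\) in~\(\TriS\). Conversely, given an inverse of~\(f\) in \(\TriS(B,A)=\Tri(B,A)\otimes_\Zen\ZenS\), Lemma~\ref{lem:localisation_as_colimit} lets me represent it by a pair \((g,s)\) with \(g\in\Tri(B,A)\), \(s\in S\); the relations \(fg\otimes 1/s = \id_B\otimes 1\) and \(gf\otimes 1/s = \id_A\otimes 1\) in the respective tensor products translate, by the colimit description, into \(tfg=ts\cdot\id_B\) and \(t'gf=t's\cdot\id_A\) in~\(\Tri\) for suitable \(t,t'\in S\), which is exactly the data of an \(S\)\nb-equivalence as in Definition~\ref{def:S-equivalence}.

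For (3) \(\Leftrightarrow\) (4), if \(s\cdot\id_C=0\), then for every object~\(X\) the \(\Zen\)\nb-modules \(\Tri(X,C)\) and \(\Tri(C,X)\) are killed by~\(s\), because pre- or post-composition with \(s\cdot\id_C\) equals multiplication by~\(s\); since \(s\) is invertible in~\(\ZenS\), these modules vanish upon tensoring with~\(\ZenS\), so \(C\) is a zero object in~\(\TriS\). Conversely, if \(C\) is zero in~\(\TriS\), then \(\id_C\otimes 1=0\) in \(\Tri(C,C)\otimes_\Zen\ZenS\), and by the colimit description in Lemma~\ref{lem:localisation_as_colimit} there is \(s\in S\) with \(s\cdot\id_C=0\).

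For (2) \(\Leftrightarrow\) (4), I would apply \(\Tri(X,\blank)\) to the exact triangle to obtain a long exact sequence of \(\Zen\)\nb-modules, then tensor with the flat \(\Zen\)\nb-module~\(\ZenS\); flatness preserves exactness, giving
\[
\dotsb \to \TriS(X,A)\xrightarrow{f_*}\TriS(X,B)\to \TriS(X,C)\to \TriS(X,A[1])\xrightarrow{f_*}\TriS(X,B[1])\to\dotsb.
\]
From this sequence, \(f_*\) is an isomorphism for every~\(X\) if and only if \(\TriS(X,C)=0\) for every~\(X\); by the Yoneda Lemma applied in the additive category~\(\TriS\), the former is equivalent to~(2) and the latter (taking \(X=C\) and using that a zero identity detects zero objects) is equivalent to~(4).

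There is no real obstacle: the only subtle point is the translation between equalities in~\(\TriS\) and equalities in~\(\Tri\) modulo multiplication by \(S\), which is exactly what Lemma~\ref{lem:localisation_as_colimit} provides. I would be careful to use the two-sided form of the \(S\)\nb-equivalence relation from Lemma~\ref{lem:S-equivalence} to avoid duplicating the argument on each side of~\(f\).
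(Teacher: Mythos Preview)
Your proposal is correct and follows essentially the same approach as the paper: the same three equivalences \((1)\Leftrightarrow(2)\), \((3)\Leftrightarrow(4)\), \((2)\Leftrightarrow(4)\) are established using the colimit description of~\(\ZenS\) (Lemma~\ref{lem:localisation_as_colimit}) to translate between~\(\Tri\) and~\(\TriS\), flatness of~\(\ZenS\) to get the long exact sequence in~\(\TriS\), and the Yoneda Lemma. The only cosmetic difference is in \((3)\Rightarrow(4)\), where the paper argues via ``the zero map on~\(C\) becomes invertible'' while you argue that all hom-modules to and from~\(C\) are \(s\)-torsion; both are equally valid.
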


\begin{proof}
  (1)\(\iff\)(2): Since the morphism spaces in~\(\TriS\) are \(\ZenS\)-modules and elements of~\(S\) become invertible in~\(\ZenS\), (1) implies that~\(f\) becomes invertible in~\(\TriS\).  Conversely, suppose that~\(f\) becomes invertible in~\(\TriS\).  Lemma~\ref{lem:localisation_as_colimit} shows that its inverse is of the form \(s^{-1}g\) for some \(g\in\Tri(B,A)\), \(s\in S\).  The relation \(f\cdot (s^{-1}g) = \id\) means that \(f \cdot (u g) = us\) for some \(u\in S\).  Similarly, we get \((vg)\cdot f = vs\) for some \(v\in S\).  Hence~\(f\) is an \(S\)\nb-equivalence, so that~(2) implies~(1).

  (3)\(\iff\)(4): The object~\(C\) becomes zero in~\(\TriS\) if and only if the zero map on~\(C\) becomes invertible in~\(\TriS\).  Since there is only one map on the zero object, the inverse map in~\(\TriS\) must be the image of~\(\id_C\).  This means that \(s\cdot \id_C = s\cdot 0\) for some \(s\in S\).  Thus (3)\(\iff\)(4).

  (2)\(\iff\)(4): Since~\(\ZenS\) is a flat \(\Zen\)\nb-module, the functor \(\TriS(D,\blank)\) is homological for any object~\(D\) of~\(\Tri\).  By the Yoneda Lemma, \(f\) becomes invertible in~\(\TriS\) if and only if \(\TriS(D,f)\) is invertible for all~\(D\).  By the long exact sequence for the exact triangle \(A\to B\to C\to A[1]\) and the homological functor~\(\TriS(D,\blank)\), this is equivalent to \(\TriS(D,C)=0\) for all~\(D\).  And this is equivalent to \(C\cong0\) in~\(\TriS\) by another application of the Yoneda Lemma.
\end{proof}

\begin{cor}
  \label{cor:Fins_thick}
  The class~\(\FinS\) of \(S\)\nb-finite objects is a thick subcategory of~\(\Tri\).
\end{cor}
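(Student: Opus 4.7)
The plan is to verify the three defining properties of a thick subcategory: closure under shifts, under extensions (the two-out-of-three property for exact triangles), and under direct summands. The first and third are essentially trivial. If $s\cdot\id_A=0$, then applying the additive suspension gives $s\cdot\id_{A[n]} = (s\cdot \id_A)[n] = 0$ for every $n\in\Z$; and if $i\colon A\to B$, $r\colon B\to A$ satisfy $r\circ i=\id_A$ with $s\cdot\id_B=0$, then $s\cdot\id_A = r\circ(s\cdot\id_B)\circ i = 0$.

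The interesting step is closure under extensions.  First I would record an auxiliary fact: whenever $f\colon X\to Y$ is an \(S\)\nb-equivalence, \(X\) is \(S\)-finite if and only if \(Y\) is.  By Lemma~\ref{lem:S-equivalence} there exist $g\in\Tri(Y,X)$ and $u\in S$ with $g\circ f = u\cdot\id_X$ and $f\circ g = u\cdot\id_Y$.  Assuming $s\cdot\id_X=0$, the \(\Zen\)\nb-bilinearity of composition gives $s\cdot g = (s\cdot\id_X)\circ g = 0$, hence $(su)\cdot\id_Y = s\cdot(f\circ g) = f\circ(s\cdot g) = 0$, and $su\in S$; the reverse implication is symmetric.

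Now let $A\xrightarrow{f} B\xrightarrow{g} C\xrightarrow{h} A[1]$ be an exact triangle with two of its vertices in $\FinS$.  If $A$ and $C$ are $S$-finite, Proposition~\ref{pro:S-finite-equivalence} tells us that $f$ is an \(S\)\nb-equivalence, and the auxiliary fact applied to $f$ forces $B\in\FinS$.  If $A$ and $B$ are $S$-finite, I rotate to the exact triangle $B\xrightarrow{g} C\xrightarrow{h} A[1]\xrightarrow{-f[1]} B[1]$; closure under shifts puts $A[1]$ in $\FinS$, so Proposition~\ref{pro:S-finite-equivalence} shows $g$ is an $S$-equivalence, and the auxiliary fact applied to $g$ gives $C\in\FinS$.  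The remaining case, $B,C\in\FinS$, is handled by the backward rotation $C[-1]\to A\to B\to C$ in the same way.

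The only thing that could go wrong is a bookkeeping slip in the rotations, which one must match against Proposition~\ref{pro:S-finite-equivalence} so that the hypothesis is applied to the correct ``cone'' vertex of the rotated triangle; but since the \(S\)\nb-finite and \(S\)\nb-equivalent notions are both preserved by shifts (and signs do not affect \(S\)\nb-equivalences), this is entirely routine.
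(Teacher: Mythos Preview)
Your proof is correct. Both your argument and the paper's hinge on Proposition~\ref{pro:S-finite-equivalence}, but they use different parts of it. You work entirely inside~\(\Tri\): closure under shifts and summands is handled by a one-line algebraic manipulation with \(s\cdot\id\), and for the two-out-of-three property you use the equivalence (1)\(\iff\)(3) together with your auxiliary observation that \(S\)\nb-equivalences transport \(S\)\nb-finiteness. The paper instead passes to~\(\TriS\) throughout, using the characterisation (3)\(\iff\)(4) that an object is \(S\)\nb-finite if and only if it vanishes in~\(\TriS\): summand closure follows because the functor \(\Tri\to\TriS\) is additive, and in the triangle case one object going to zero in~\(\TriS\) together with the map becoming invertible there (by (2)\(\iff\)(3)) forces the remaining object to zero in~\(\TriS\). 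Your route is slightly more self-contained and makes the underlying algebra explicit; the paper's is terser once~\(\TriS\) is in hand, and it treats only one of the three rotation cases, leaving the others implicit.
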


\begin{proof}
  It is clear that~\(\FinS\) is closed under isomorphism and suspension.  It is closed under direct summands because the functor \(\Tri\to\TriS\) is additive.  If \(B\) and~\(C\) in an exact triangle \(A\to B\to C\to A[1]\) are \(S\)\nb-finite, then \(B\cong0\) in~\(\TriS\) and the map \(A\to B\) becomes an isomorphism in~\(\TriS\) by Proposition~\ref{pro:S-finite-equivalence}.  Hence \(A\cong0\) in~\(\TriS\), so that~\(A\) is \(S\)\nb-finite by Proposition~\ref{pro:S-finite-equivalence}.
\end{proof}

\begin{thm}
  \label{the:TriS_localisation_triangulated}
  The category~\(\TriS\) together with the functor \(\Tri\to\TriS\) is the localisation of~\(\Tri\) at the thick subcategory~\(\FinS\).
\end{thm}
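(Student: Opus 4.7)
The plan is to identify $\TriS$ with the Verdier quotient $\Tri/\FinS$ via the obvious candidate functor. Since Proposition~\ref{pro:S-finite-equivalence} says that every object of $\FinS$ becomes a zero object in $\TriS$, the universal property of Verdier localisation provides a canonical triangulated functor $\Phi\colon \Tri/\FinS \to \TriS$ compatible with the maps from~$\Tri$. Since the quotient functor $\Tri\to\Tri/\FinS$ is the identity on objects, essential surjectivity of~$\Phi$ is immediate. So the substance of the proof is to show that $\Phi$ is fully faithful.

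For fullness, I would use Lemma~\ref{lem:localisation_as_colimit}: every element of $\TriS(A,B)$ is the image of a fraction $s^{-1}g$ with $g\in\Tri(A,B)$ and $s\in S$. Complete $s\cdot\id_A$ to an exact triangle $A\xrightarrow{s\cdot\id_A} A\xrightarrow{p} C_s \xrightarrow{\delta} A[1]$. The central calculation is that $C_s$ is $S$-finite, so $s\cdot\id_A$ becomes an isomorphism in $\Tri/\FinS$ by Proposition~\ref{pro:S-finite-equivalence}, and then the roof $A \xleftarrow{s\cdot\id_A} A \xrightarrow{g} B$ in $\Tri/\FinS$ is a preimage of $s^{-1}g$ under~$\Phi$. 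To prove $C_s$ is $S$-finite, apply $\Tri(C_s,\blank)$ to the defining triangle; since $\id_{C_s}$ maps to $\delta$ and consecutive morphisms in an exact triangle compose to zero, we have $s\cdot\delta=(s\cdot\id_{A[1]})\circ\delta=0$, so $s\cdot\id_{C_s} = p\circ h$ for some $h\colon C_s\to A$. Then
\[
s^2\cdot\id_{C_s} = p\circ(s\cdot h) = \bigl(p\circ(s\cdot\id_A)\bigr)\circ h = 0,
\]
because $s$ is central and $p\circ(s\cdot\id_A)=0$. Hence $s^2 \in S$ kills $\id_{C_s}$.

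For faithfulness, suppose $f\in\Tri(A,B)$ has $\Phi([f])=0$. The description of $\TriS(A,B)$ as a colimit in Lemma~\ref{lem:localisation_as_colimit} gives some $s\in S$ with $s\cdot f=0$ in $\Tri(A,B)$. Then $f\circ(s\cdot\id_A)=0$, so the long exact sequence obtained from applying $\Tri(\blank,B)$ to the triangle $A\xrightarrow{s\cdot\id_A} A\xrightarrow{p} C_s\to A[1]$ shows that~$f$ factors through~$p$, i.e.\ through the $S$-finite object~$C_s$. Morphisms factoring through objects of~$\FinS$ are zero in $\Tri/\FinS$, so $[f]=0$.

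I expect the fullness direction to be the main point requiring care, essentially because it rests on the small algebraic identity $s^2\cdot\id_{C_s}=0$ establishing that cones of multiplication-by-$s$ really live in~$\FinS$; once that is in hand, both directions of the fully-faithful argument reduce to the exact triangle associated with $s\cdot\id_A$ together with Lemma~\ref{lem:localisation_as_colimit} and Proposition~\ref{pro:S-finite-equivalence}.
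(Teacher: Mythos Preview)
Your argument is correct and takes a different route from the paper's. The paper works purely at the level of categorical localisation: it identifies $\Tri/\FinS$ with the Gabriel--Zisman localisation at the $S$-equivalences, introduces the subclass $W=\{s\cdot f : s\in S,\ f\text{ invertible in }\Tri\}$, observes that $W$ admits a calculus of fractions with localisation $\TriS$, and then checks that inverting $W$ is the same as inverting all $S$-equivalences (since every $S$-equivalence has one-sided inverses in~$W$). You instead build the comparison functor $\Phi$ and verify full faithfulness by direct triangle manipulation, which is more hands-on and foregrounds the cones~$C_s$. Two minor points. First, your $s^2\cdot\id_{C_s}=0$ computation is correct but redundant: $s\cdot\id_A$ is trivially an $S$-equivalence (take $g=h=\id_A$ in Definition~\ref{def:S-equivalence}), so Proposition~\ref{pro:S-finite-equivalence} gives $C_s\in\FinS$ at once. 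Second, your faithfulness step treats only classes $[f]$ with $f\in\Tri(A,B)$, whereas a general element of $(\Tri/\FinS)(A,B)$ is a roof $[f]\circ[u]^{-1}$ with $u$ an $S$-equivalence; one line reduces to your case since $\Phi([u])$ is invertible. (Also, calling $\Phi$ a ``triangulated'' functor is premature---the triangulated structure on $\TriS$ is a \emph{consequence} of the theorem---so invoke the Gabriel--Zisman universal property of $\Tri/\FinS$ instead.)
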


\begin{proof}
  It follows from the results above and the standard description of the localisation that the localisation of~\(\Tri\) at~\(\FinS\) is the category theoretic localisation at the class of \(S\)\nb-equivalences (see~\cite{Gabriel-Zisman:Fractions}).

  Let~\(W\) be the set of all morphisms in~\(\Tri\) of the form \(s\cdot f\) with \(s\in S\) and an invertible morphism~\(f\) in~\(\Tri\).  It is easy to see that~\(W\) has both left and right calculi of fractions with localisation~\(\TriS\).

  It remains to show that the category theoretic localisations of~\(\Tri\) at~\(W\) and at the class of all \(S\)\nb-equivalences are equal.  That is, a functor defined on~\(\Tri\) maps all elements of~\(W\) to isomorphisms if and only if it maps all \(S\)\nb-equivalences to isomorphisms.  One direction is clear because~\(W\) consists of \(S\)\nb-equivalences.  Conversely, if~\(f\) is an \(S\)\nb-equivalence, then there are morphisms \(g\) and~\(h\) such that \(fg\) and~\(hf\) belong to~\(W\).  Hence a functor that maps~\(W\) to isomorphisms maps \(fg\) and~\(hf\) to isomorphisms.  Then it maps~\(f\) to an isomorphism as well.
\end{proof}

\begin{defn}
  \label{def:localise_homological}
  Let \(F\colon \Tri\to\Ab\) be a homological functor.  Its \emph{localisation at~\(S\)} is the functor
  \[
  S^{-1}F\colon \Tri\to\Ab,\qquad
  S^{-1}F(A) \defeq F(A)\otimes_\Zen \ZenS.
  \]
\end{defn}

Since~\(\ZenS\) is a flat \(\Zen\)\nb-module, the functor~\(S^{-1}F\) is again homological.  By definition, the localisation of the homological functor \(\Tri(A,\blank)\) for an object~\(A\) of~\(\Tri\) agrees with~\(\TriS(A,\blank)\).

\begin{prop}
  \label{pro:localisation_functor}
  The functor~\(S^{-1}F\) is the localisation of~\(F\) with respect to the thick subcategory of \(S\)\nb-finite objects.
\end{prop}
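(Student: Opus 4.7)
The plan is to exploit Theorem~\ref{the:TriS_localisation_triangulated}, which identifies~\(\TriS\) as the Verdier quotient \(\Tri/\FinS\), and reduce the statement to verifying that \(S^{-1}F\) satisfies the universal property of a localised homological functor: namely, it must send every \(S\)\nb-finite object to zero, and any homological functor \(G\colon \Tri\to\Ab\) with this property that receives a natural transformation from~\(F\) must receive a unique natural transformation from~\(S^{-1}F\) compatible with \(F\to S^{-1}F\).

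For the first point, I would argue as follows. If \(A\inOb\FinS\), choose \(s\in S\) with \(s\cdot\id_A=0\). Applying~\(F\) gives \(s\cdot\id_{F(A)}=0\); hence for every \(x\in F(A)\) and every \(\Zen\)\nb-module generator \(s^{-1}\cdot 1\) of~\(\ZenS\) we have, in \(F(A)\otimes_\Zen\ZenS\),
\[
x\otimes s^{-1} = (s\cdot x)\otimes s^{-2} = 0,
\]
so \(S^{-1}F(A)=0\). Since~\(S^{-1}F\) is homological (using flatness of~\(\ZenS\)), vanishing on~\(\FinS\) is equivalent to sending \(S\)\nb-equivalences to isomorphisms by the standard long exact sequence argument, combined with Proposition~\ref{pro:S-finite-equivalence}.

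For the universal property, let \(G\colon \Tri\to\Ab\) be a homological functor vanishing on~\(\FinS\), and let \(\tau\colon F\Rightarrow G\) be a natural transformation. For each \(s\in S\) and each object~\(A\), the cone \(C\) of \(s\cdot\id_A\colon A\to A\) lies in~\(\FinS\) (because it becomes zero in~\(\TriS\) by Proposition~\ref{pro:S-finite-equivalence}), so the long exact sequence forces \(G(s\cdot\id_A)\) to be an isomorphism on \(G(A)\). Hence each \(G(A)\) carries a canonical \(\ZenS\)\nb-module structure, and~\(\tau_A\colon F(A)\to G(A)\) is a \(\Zen\)\nb-linear map into an \(\ZenS\)\nb-module. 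By the universal property of \(\blank\otimes_\Zen \ZenS\) it factors uniquely as \(F(A)\to F(A)\otimes_\Zen\ZenS\to G(A)\), and naturality of this factorisation in~\(A\) is immediate since everything in sight is natural.

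The only mild subtlety is the passage from ``inverts \(S\)\nb-equivalences'' to ``\(\ZenS\)\nb-linear on objects'', which requires producing the cone of \(s\cdot\id_A\) and recognising it as \(S\)\nb-finite; I would expect this to be the main place one has to invoke the triangulated structure rather than pure module theory, but it is handled at once by Proposition~\ref{pro:S-finite-equivalence}. Once these two points are in place, Theorem~\ref{the:TriS_localisation_triangulated} together with the universal property of the Verdier quotient completes the proof, since \(S^{-1}F\) both factors through \(\Tri\to\TriS\) (with the factorisation uniquely determined by the values on~\(\Tri\)) and is initial among homological functors under~\(F\) with this property.
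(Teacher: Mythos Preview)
Your argument is correct but takes a different route from the paper. The paper uses the explicit colimit description of the localised functor from~\cite{Inassaridze-Kandelaki-Meyer:Localisation_colocalisation}: the localisation of~\(F\) evaluated at~\(A\) is the filtered colimit of~\(F(B)\) over \(S\)\nb-equivalences \(f\colon A\to B\); it then observes (as in the proof of Theorem~\ref{the:TriS_localisation_triangulated}) that the maps \(s\cdot\id_A\) for \(s\in S\), organised by the filtered category~\(\Cats\) of Definition~\ref{def:Cats}, are cofinal among all \(S\)\nb-equivalences out of~\(A\), and finally invokes Lemma~\ref{lem:localisation_as_colimit} to identify the resulting colimit with \(F(A)\otimes_\Zen\ZenS\). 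You instead verify the universal property of the localised functor directly: any homological functor~\(G\) vanishing on~\(\FinS\) takes values in \(\ZenS\)\nb-modules (because the cone of \(s\cdot\id_A\) is \(S\)\nb-finite), so any natural transformation \(F\Rightarrow G\) factors uniquely through~\(S^{-1}F\) by the adjunction between restriction and \(\blank\otimes_\Zen\ZenS\). Your approach is cleaner conceptually and avoids the colimit bookkeeping; the paper's approach has the virtue of hooking directly into the filtered-colimit machinery used later for the colocalisation exact sequence and for Proposition~\ref{pro:torsion_from_finite}. One small remark: your final paragraph invoking Theorem~\ref{the:TriS_localisation_triangulated} and the Verdier quotient is superfluous---the universal property you establish in the first two steps already characterises the localised functor up to unique isomorphism, and no appeal to the triangulated localisation of~\(\Tri\) itself is needed.
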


\begin{proof}
  The localisation of~\(F\) at the \(S\)\nb-finite objects maps an object~\(A\) of~\(\Tri\) to the colimit of \(F(B)_f\), where~\(f\) runs through the directed set of \(S\)\nb-equivalences \(f\colon A\to B\) (see also~\cite{Inassaridze-Kandelaki-Meyer:Localisation_colocalisation}).  As in the proof of Theorem~\ref{the:TriS_localisation_triangulated}, we may replace \(S\)\nb-equivalences by \(s\cdot\id_A\) for \(s\in S\), where we use the filtered category~\(\Cats\) introduced in Definition~\ref{def:Cats}.  The colimit over~\(\Cats\) agrees with \(S^{-1}F(A)\) by Lemma~\ref{lem:localisation_as_colimit}.
\end{proof}

\subsection{The colocalisation}
\label{sec:colocalisation}

Following~\cite{Inassaridze-Kandelaki-Meyer:Localisation_colocalisation}, we embed the functor \(\Tri\to\TriS\) into an exact sequence
\begin{multline}
  \label{eq:loc_coloc_sequence}
  \dotsb \to
  \Tri_1(A,B) \to
  \TriS_1(A,B) \to
  \Tri_1(A,B;\ZenS/\Zen) \\\to
  \Tri_0(A,B) \to
  \TriS_0(A,B) \to
  \Tri_0(A,B;\ZenS/\Zen) \to
  \Tri_{-1}(A,B) \to \dotsb.
\end{multline}
In the notation of~\cite{Inassaridze-Kandelaki-Meyer:Localisation_colocalisation}, \(\TriS(A,B) \defeq \Tri/\FinS(A,B)\) and \(\Tri_{n+1}(A,B;\ZenS/\Zen) \defeq (\Tri/\FinS)^\bot_n(A,B)\), where~\(\FinS\) denotes the class of \(S\)\nb-finite objects.  The degree shift is natural if we think of \(\Tri_*(A,B)\) and \(\TriS_*(A,B)\) as~\(\Tri\) with coefficients in \(\Zen\) and~\(\ZenS\).

More generally, if \(F\colon \Tri\to\Ab\) is a homological functor, then the map \(F\to S^{-1}F\) embeds in an exact sequence
\begin{multline}
  \label{eq:loc_coloc_sequence_fun}
  \dotsb \to
  F_1(A) \to
  S^{-1} F_1(A) \to
  F_1(A;\ZenS/\Zen) \\\to
  F_0(A) \to
  S^{-1} F_0(A) \to
  F_0(A;\ZenS/\Zen) \to
  F_{-1}(A) \to
  S^{-1} F_{-1}(A) \to \dotsb
\end{multline}
with \(F_{n+1}(A;\ZenS/\Zen) \defeq \Rightco_n F(A)\) in the notation of~\cite{Inassaridze-Kandelaki-Meyer:Localisation_colocalisation}.

\begin{prop}
  \label{pro:nat_trafo_loc_coloc}
  A natural transformation \(\Phi\colon F\Rightarrow G\) is invertible if and only if both its colocalisation \(\Phi(\blank;\ZenS/\Zen)\colon F(\blank;\ZenS/\Zen) \Rightarrow G(\blank;\ZenS/\Zen)\) and its localisation \(S^{-1}\Phi\colon S^{-1}F\Rightarrow S^{-1}G\) are invertible.
\end{prop}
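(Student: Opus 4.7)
The plan is to argue by the Five Lemma applied to the naturality ladder induced by \(\Phi\) on the long exact sequence~\eqref{eq:loc_coloc_sequence_fun}. The easy direction is that if \(\Phi\) itself is invertible, then both the localisation \(S^{-1}\Phi\) (obtained by tensoring with the flat \(\Zen\)-module \(\ZenS\)) and the colocalisation \(\Phi(\blank;\ZenS/\Zen)\) (obtained by the functorial construction \(\Rightco_n\) from~\cite{Inassaridze-Kandelaki-Meyer:Localisation_colocalisation}) are invertible, since both operations are functorial in the homological functor.

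For the nontrivial direction, I would first observe that the long exact sequence~\eqref{eq:loc_coloc_sequence_fun} is natural in the homological functor~\(F\): a natural transformation \(\Phi\colon F\Rightarrow G\) induces a morphism of long exact sequences. This naturality is inherited from the constructions \(F\mapsto S^{-1}F\) and \(F\mapsto F(\blank;\ZenS/\Zen)\) explained in the excerpt and in~\cite{Inassaridze-Kandelaki-Meyer:Localisation_colocalisation}. Fixing an object \(A\in\Tri\), this yields the commutative ladder
\[
\begin{CD}
S^{-1}F_{n+1}(A) @>>> F_{n+1}(A;\ZenS/\Zen) @>>> F_n(A) @>>> S^{-1}F_n(A) @>>> F_n(A;\ZenS/\Zen) \\
@VV{S^{-1}\Phi}V @VV{\Phi(\blank;\ZenS/\Zen)}V @VV{\Phi}V @VV{S^{-1}\Phi}V @VV{\Phi(\blank;\ZenS/\Zen)}V \\
S^{-1}G_{n+1}(A) @>>> G_{n+1}(A;\ZenS/\Zen) @>>> G_n(A) @>>> S^{-1}G_n(A) @>>> G_n(A;\ZenS/\Zen)
\end{CD}
\]
with exact rows. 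By hypothesis the four outer vertical maps are isomorphisms, so the Five Lemma forces \(\Phi_A\colon F_n(A)\to G_n(A)\) to be an isomorphism as well. Since \(A\) and~\(n\) were arbitrary, \(\Phi\) is invertible as a natural transformation.

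The only genuine obstacle is to be sure that the sequence~\eqref{eq:loc_coloc_sequence_fun} is \emph{natural} in \(F\), not merely defined functor by functor. This requires inspecting the construction of \(\Rightco_n F\) and the connecting maps in~\cite{Inassaridze-Kandelaki-Meyer:Localisation_colocalisation} and checking that a natural transformation \(F\Rightarrow G\) induces a compatible transformation on the cones/shifts used there; given the explicit description as a colimit over \(\Cats\) (Lemma~\ref{lem:localisation_as_colimit}) together with the general machinery of that reference, this is essentially formal and should need no additional computation beyond pointing to the relevant functoriality statements. Everything else is a direct application of the Five Lemma.
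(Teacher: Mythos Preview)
Your argument is correct. The paper does not actually prove this proposition in-line: its entire proof is a one-line citation to \cite{Inassaridze-Kandelaki-Meyer:Localisation_colocalisation}*{Corollary 4.4}. Your Five Lemma argument on the naturality ladder for the sequence~\eqref{eq:loc_coloc_sequence_fun} is exactly the standard way to establish such a statement, and is presumably what that cited corollary does. Your identification of the only nontrivial point---that the long exact sequence is natural in the homological functor~\(F\), which must be extracted from the construction in the cited reference---is accurate, and you are right that this is essentially formal once one has the colimit description.
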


\begin{proof}
  This is \cite{Inassaridze-Kandelaki-Meyer:Localisation_colocalisation}*{Corollary 4.4}.
\end{proof}

The last result hints at an application of colocalisation and localisation: they break up computations in~\(\Tri\) into two hopefully simpler computations.  We will return to this point in some examples later.

\begin{prop}
  \label{pro:torsion_Tor}
  For an object~\(A\) of~\(\Tri\) and a homological functor \(F\colon \Tri\to\Ab\), we have \(\Tor_n^\Zen(F_*(A);\ZenS/\Zen)=0\) for all \(n\ge2\), and there is a natural group extension
  \[
  \Tor_0^\Zen(F_0(A);\ZenS/\Zen) \mono
  F_0(A;\ZenS/\Zen) \epi
  \Tor_1^\Zen(F_{-1}(A);\ZenS/\Zen).
  \]
\end{prop}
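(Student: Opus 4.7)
My plan is to exploit the very short flat resolution
\[
0 \to \Zen \xrightarrow{i_S} \ZenS \to \ZenS/\Zen \to 0
\]
of \(\ZenS/\Zen\) as a \(\Zen\)\nb-module, in which \(\Zen\) is free and \(\ZenS\) is flat by construction. This resolution immediately yields the vanishing statement: \(\Tor_n^\Zen(M,\ZenS/\Zen)=0\) for every \(\Zen\)\nb-module~\(M\) and every \(n\ge2\), because \(\ZenS/\Zen\) has flat dimension at most~\(1\).

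Tensoring the same resolution with an arbitrary \(\Zen\)\nb-module~\(M\), the resulting \(\Tor\) long exact sequence collapses: flatness of~\(\ZenS\) kills \(\Tor_1^\Zen(M,\ZenS)\), leaving the four-term exact sequence
\[
0 \to \Tor_1^\Zen(M,\ZenS/\Zen) \to M \to S^{-1}M \to M\otimes_\Zen\ZenS/\Zen \to 0.
\]
Thus \(\Tor_1^\Zen(M,\ZenS/\Zen)\) is naturally the kernel, and \(\Tor_0^\Zen(M,\ZenS/\Zen)=M\otimes_\Zen\ZenS/\Zen\) naturally the cokernel, of the localisation map \(M\to S^{-1}M\).

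To extract the group extension I cut~\eqref{eq:loc_coloc_sequence_fun} into short exact pieces in the standard way: any exact sequence \(P\to Q\to R\to T\to U\) yields a short exact sequence \(0\to\coker(P\to Q)\to R\to\ker(T\to U)\to0\). Applied to the segment
\[
F_0(A) \to S^{-1}F_0(A) \to F_0(A;\ZenS/\Zen) \to F_{-1}(A) \to S^{-1}F_{-1}(A),
\]
this produces a short exact sequence whose outer terms are the cokernel of \(F_0(A)\to S^{-1}F_0(A)\) and the kernel of \(F_{-1}(A)\to S^{-1}F_{-1}(A)\); substituting the identifications from the second paragraph gives precisely the required extension. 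Naturality in \(A\) and in~\(F\) is automatic because both~\eqref{eq:loc_coloc_sequence_fun} and the \(\Tor\) long exact sequence are natural. No serious obstacle appears; the only care needed is keeping track of degree conventions so that \(F_0(A;\ZenS/\Zen)\) sits between \(\Tor_0^\Zen(F_0(A),\ZenS/\Zen)\) and \(\Tor_1^\Zen(F_{-1}(A),\ZenS/\Zen)\) in the order asserted.
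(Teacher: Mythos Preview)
Your proof is correct and follows essentially the same route as the paper: use the two-term flat resolution \(0\to\Zen\to\ZenS\to\ZenS/\Zen\to0\) to identify \(\Tor_0^\Zen(M,\ZenS/\Zen)\) and \(\Tor_1^\Zen(M,\ZenS/\Zen)\) with the cokernel and kernel of \(M\to S^{-1}M\), then read off the short exact sequence from the localisation--colocalisation long exact sequence~\eqref{eq:loc_coloc_sequence_fun}. Your write-up is in fact a bit more explicit than the paper's about how the long exact sequence is sliced into the desired extension.
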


\begin{proof}
  Since \(\Zen\) and~\(\ZenS\) are flat \(\Zen\)\nb-modules, the defining exact sequence
  \[
  \dotsb \to 0 \to \Zen \to \ZenS\to \ZenS/\Zen
  \]
  is a flat resolution of~\(\ZenS/\Zen\).  We use this resolution to compute \(\Tor_n^\Zen(\blank;\ZenS/\Zen)\).  This yields the vanishing for \(n\ge2\) and
  \begin{equation}
    \label{eq:Tor}
    \begin{aligned}
      \Tor_1^\Zen(M;\ZenS/\Zen) &= \ker (M \to S^{-1}M),\\
      \Tor_0^\Zen(M;\ZenS/\Zen) &= \coker (M \to S^{-1}M).
    \end{aligned}
  \end{equation}
  The isomorphisms in~\eqref{eq:Tor} and the exact sequence~\eqref{eq:loc_coloc_sequence_fun} finish the proof.
\end{proof}

Proposition~\ref{pro:torsion_Tor} justifies our notation \(F(A;\ZenS/\Zen)\): we expect exactly such an extension for a homology theory with coefficients \(\ZenS/\Zen\).  The same remark applies to \(\Tri(A,B;\ZenS/\Zen)\) because it is a special case of \(F(B;\ZenS/\Zen)\).

It follows directly from the definition of~\(S^{-1}M\) that
\begin{equation}
  \label{eq:Tor_as_colimit}
  \begin{aligned}
    \ker (M \to S^{-1}M) &=
    \bigcup_{s\in S} \ker (M\xrightarrow{s} M),\\
    \coker (M \to S^{-1}M) &=
    \varinjlim_{s\inOb\Cats} \coker (M \xrightarrow{s} M),
  \end{aligned}
\end{equation}
where~\(s\) also denotes the endomorphism \(m\mapsto sm\) on~\(M\).  These endomorphisms form an inductive system indexed by~\(\Cats\) because of the commuting diagrams
\[
\xymatrix{
  M \ar[r]^{t} & M\\
  M \ar[r]^{s} \ar@{=}[u] & M \ar[u]_{u}
}
\]
for arrows \(u\colon s\to t\) in~\(\Cats\).  This explains the colimit of \(\coker (M \xrightarrow{s} M)\) for \(s\in\Cats\).  Of course, \(\coker (s\colon M\to M) = M/sM\).

\subsection{Homological functors with coefficients}
\label{sec:finite_homology}

Can we describe \(F(\blank;\ZenS/\Zen)\) as a filtered colimit, in analogy to~\eqref{eq:Tor_as_colimit}?  More precisely, given \(s\in S\), is there a homological functor \(A\mapsto F(A;s)\) that fits into an extension
\begin{equation}
  \label{eq:homology_coeff_s}
  \coker \bigl(s\colon F_0(A)\to F_0(A)\bigr) \mono
  F_0(A;s) \epi \ker\bigl(s\colon F_{-1}(A)\to F_{-1}(A)\bigr),
\end{equation}
and is \(F(A;\ZenS/\Zen)\) a filtered colimit of \(F(A;s)\) in a natural way?  If~\(s\) is not a zero divisor in~\(\Zen\), then~\eqref{eq:homology_coeff_s} is the expected behaviour for~\(F\) with coefficients in the \(\Zen\)\nb-module \(\Zen/s\Zen\); if~\(s\) is a zero divisor, then we should view \(F(A;s)\) as a theory with coefficients in the chain complex \(\Zen \xrightarrow{s}\Zen\) concentrated in degrees \(1\) and~\(0\).

Let~\(A_{s}\) be the cone of the map \(s\cdot\id_A\colon A\to A\).  Then the long exact homology sequence for~\(F\) applied to the defining exact triangle
\begin{equation}
  \label{eq:natural_cone_triangle}
  A \xrightarrow{s} A \to A_{s} \to A[1]
\end{equation}
yields an extension as in~\eqref{eq:homology_coeff_s}.  This suggests to define~\(F_0(A;s)\) as~\(F(A_{s})\).  However, without further assumptions it is not clear whether the construction of~\(A_{s}\) is natural, that is, whether \(F_*(A_{s})\) is functorial in~\(A\).  To proceed, we therefore \emph{assume that~\eqref{eq:natural_cone_triangle} is the object-part of a functor from~\(\Tri\) to the category of exact triangles in~\(\Tri\), such that the resulting functor \(A\mapsto A_{s}\) is triangulated.}  Then
\[
F(A;s)\defeq F(A_{s})
\]
defines a homological functor on~\(\Tri\) that fits into natural exact sequences as in~\eqref{eq:homology_coeff_s}.

\begin{rem}
  \label{rem:cone_functor_automatic}
  If~\(\Tri\) is a \emph{tensor} triangulated category and \(\Zen\subseteq\Tri(\Unit,\Unit)\), then the above assumption is automatic because we get~\eqref{eq:natural_cone_triangle} by tensoring the triangle \(\Unit \xrightarrow{s} \Unit \to \Unit_{s} \to \Unit[1]\) by~\(A\).
\end{rem}

\begin{lem}
  \label{lem:finite_from_torsion}
  Let \(F'_*(A) \defeq F_*(A;\ZenS/\Zen)\).  Then the natural transformation \(F'_{*+1}(A) \to F_*(A)\) induces natural isomorphisms \(F'_{*+1}(A;s) \cong F_*(A;s)\) for \(s\in S\).
\end{lem}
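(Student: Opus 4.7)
The plan is to specialise the natural transformation $F'_{*+1}(\blank) \to F_*(\blank)$ coming from the exact sequence~\eqref{eq:loc_coloc_sequence_fun} to the object~$A_s$, and show directly that it is an isomorphism there. Since $F(A;s) = F(A_s)$ and $F'_{*+1}(A;s) = F'_{*+1}(A_s)$ by definition, this is exactly what the lemma claims; naturality in~$A$ is automatic from the standing assumption that $A \mapsto A_s$ is a triangulated functor.

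The pivotal observation is that $A_s$ is $S$\nb-finite. The morphism $s\cdot\id_A$ is patently an $S$\nb-equivalence in the sense of Definition~\ref{def:S-equivalence} (take $g=h=\id_A$, so that $g\circ(s\cdot\id_A) = s\cdot\id_A$ and $(s\cdot\id_A)\circ h = s\cdot\id_A$), so applying Proposition~\ref{pro:S-finite-equivalence} to the defining triangle $A\xrightarrow{s} A\to A_s\to A[1]$ shows that $A_s\in\FinS$. Equivalently, there exists $s'\in S$ with $s'\cdot\id_{A_s}=0$ in $\Tri(A_s,A_s)$.

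Applying the additive functor~$F$, we obtain $s'\cdot\id_{F_n(A_s)} = F_n(s'\cdot\id_{A_s}) = 0$ for every~$n$, so $F_n(A_s)$ is $s'$\nb-torsion as an $\Zen$\nb-module. Because $s'$ is invertible in~$\ZenS$, this forces
\[
S^{-1}F_n(A_s) = F_n(A_s) \otimes_\Zen \ZenS = 0
\]
for all~$n$. Substituting $A_s$ into~\eqref{eq:loc_coloc_sequence_fun} then makes the four\nb-term piece
\[
S^{-1}F_{n+1}(A_s) \to F'_{n+1}(A_s) \to F_n(A_s) \to S^{-1}F_n(A_s)
\]
collapse to an isomorphism $F'_{n+1}(A_s) \xrightarrow{\cong} F_n(A_s)$ induced by the natural transformation from~\eqref{eq:loc_coloc_sequence_fun}, as required.

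There is no serious obstacle: the whole argument reduces to the single identification of~$A_s$ as an $S$\nb-finite object, which is immediate from Proposition~\ref{pro:S-finite-equivalence}, after which everything follows from exactness and the fact that tensoring an $s'$\nb-torsion module with~$\ZenS$ gives zero.
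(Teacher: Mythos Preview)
Your proof is correct and follows essentially the same approach as the paper: both arguments identify~$A_s$ as $S$\nb-finite (being the cone of the $S$\nb-equivalence $s\cdot\id_A$), deduce that $S^{-1}F_*(A_s)=0$, and then read off the desired isomorphism from the localisation--colocalisation exact sequence~\eqref{eq:loc_coloc_sequence_fun}. Your version simply spells out in more detail the steps the paper compresses into three lines.
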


\begin{proof}
  Since~\(A_{s}\) is the cone of the \(S\)\nb-equivalence \(s\colon A\to A\), it is \(S\)\nb-finite.  Hence \(F_*(A_{s};\ZenS)=0\).  By the localisation--colocalisation exact sequence, this implies an isomorphism \(F_{*+1}(A_{s};\ZenS/\Zen) \cong F_*(A_{s})\).  This is equivalent to the assertion.
\end{proof}

Lemma~\ref{lem:finite_from_torsion} and~\eqref{eq:homology_coeff_s} yield a short exact sequence
\begin{multline}
  \label{eq:homology_coeff_s_tor}
  \coker \bigl(s\colon F_{*+1}(A;\ZenS/\Zen)\to F_{*+1}(A;\ZenS/\Zen)\bigr) \mono F_*(A;s)
  \\\epi \ker\bigl(s\colon F_*(A;\ZenS/\Zen)\to F_*(A;\ZenS/\Zen)\bigr),
\end{multline}

Next we want to write \(F(A;\ZenS/\Zen)\) as a filtered colimit of the finite coefficient theories \(F(A;s)\).  Given \(s,t\in S\) and \(u\in\Cats(s,t)\), that is, \(t=u\cdot s\), we may find a dotted arrow~\(\varphi\) that makes the following diagram commute:
\begin{equation}
  \label{eq:naturality_cone_s}
  \begin{aligned}
    \xymatrix{
      A \ar[r]^s \ar@{=}[d]&
      A \ar[r]\ar[d]^u&
      A_{s} \ar[r] \ar@{.>}[d]^\varphi&
      A[1] \ar@{=}[d]\\
      A \ar[r]^t&
      A \ar[r]&
      A_{t} \ar[r]&
      A[1]
    }
  \end{aligned}
\end{equation}
We let \(\Cats(A;s,t)\) be the set of all such pairs \((u,\varphi)\).

\begin{lem}
  \label{lem:filtered_Cats_A}
  The category with object set~\(S\) and morphism spaces \(\Cats(A;s,t)\) is filtered.
\end{lem}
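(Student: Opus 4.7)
The plan is to verify the two defining axioms of a filtered category: any two objects admit a common target, and any parallel pair of morphisms is coequalised by some third morphism.

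For the common-target axiom, I would mimic the proof of Lemma~\ref{lem:localisation_as_colimit}: given \(s,t \in S\), take \(st\) as the common target. The morphism \(s \to st\) uses \(u = t\) in the \(\Cats\)\nb-slot, and the required \(\varphi \colon A_s \to A_{st}\) exists by axiom (TR3) applied to the commuting left square (since \(t \cdot s = st \cdot \id_A\)). The case \(t \to st\) is symmetric.

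The equalisation axiom is the main content. Given parallel morphisms \((u,\varphi), (u',\varphi')\colon s \rightrightarrows t\), I split the task: choose \(v \in S\) with \(v(u-u') = 0\) so that the \(\Cats\)\nb-components of \((v,\psi)\circ(u,\varphi)\) and \((v,\psi)\circ(u',\varphi')\) automatically agree (exactly as in Lemma~\ref{lem:localisation_as_colimit}), and then arrange \(\psi\circ\varphi = \psi\circ\varphi'\) for a lift \(\psi\) of \(v\). The \(\Tri\)-side starts from the observation that \(\delta_t\circ(\varphi-\varphi') = 0\), since the rightmost squares of both diagrams force \(\delta_t\varphi = \delta_s = \delta_t\varphi'\). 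Hence the long exact sequence for \(\Tri(A_s,\blank)\) applied to \(A \xrightarrow{t} A \xrightarrow{p_t} A_t \xrightarrow{\delta_t} A[1]\) produces a factorisation \(\varphi - \varphi' = p_t\circ\gamma\) with \(\gamma\colon A_s\to A\). The middle-square identity \(\psi\circ p_t = p_r\cdot v\) built into any lift then gives \(\psi\circ(\varphi-\varphi') = p_r\circ(v\cdot\gamma)\).

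The critical step — and the main obstacle to the naive choice \(v = s\) — is ensuring that \(v\cdot\gamma\) vanishes. The key input is that \(A_s\) is \(S\)\nb-finite: indeed, \(s\cdot\id_A\) is an \(S\)\nb-equivalence with cone \(A_s\), so Proposition~\ref{pro:S-finite-equivalence} supplies an \(s''\in S\) with \(s''\cdot\id_{A_s} = 0\). I would then take \(v \defeq s\cdot s''\) (giving target object \(r = t s s''\)). This satisfies \(v(u-u') = s''\cdot s(u-u') = 0\) as required for the \(\Cats\)\nb-side, and by \(\Zen\)\nb-bilinearity of composition,
\[
v\cdot\gamma \;=\; \gamma\circ(ss''\cdot\id_{A_s}) \;=\; \gamma\circ 0 \;=\; 0,
\]
so \(\psi\circ(\varphi-\varphi') = p_r\circ 0 = 0\), completing the equalisation.
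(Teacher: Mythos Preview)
Your argument is correct, and it takes a genuinely different route from the paper's own proof. The paper does not argue directly: for the equalisation step it invokes a result from~\cite{Inassaridze-Kandelaki-Meyer:Localisation_colocalisation} asserting that any two parallel arrows \((u_1,\varphi_1)\), \((u_2,\varphi_2)\) in the larger filtered category of \(S\)\nb-equivalences out of~\(A\) are equalised by some morphism of triangles \((g,\psi)\) with~\(g\) an \(S\)\nb-equivalence; it then post-composes with a further \((g',\psi')\) chosen so that \(g'g = v\in S\), thereby landing back in \(\Cats(A)\). Your proof avoids this external citation entirely: you use the long exact sequence for \(\Tri(A_s,\blank)\) to factor \(\varphi-\varphi'\) through~\(p_t\), and then exploit the \(S\)\nb-finiteness of~\(A_s\) (via Proposition~\ref{pro:S-finite-equivalence}) to find \(s''\in S\) annihilating~\(A_s\), so that \(v=ss''\) kills the difference. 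This is more elementary and fully self-contained within the present paper; the paper's version is terser but only because the real work is outsourced.

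One small remark: you could streamline slightly by noting that the factor~\(s\) in \(v=ss''\) is doing double duty---it both handles the \(\Cats\)\nb-side equality \(vu=vu'\) (since \(su=t=su'\)) and is harmless on the \(\Tri\)\nb-side. In fact it is not hard to show directly that \(s^2\cdot\id_{A_s}=0\) (compare Lemma~\ref{lem:square_annihilates_finite_coef}), so one may take \(s''=s^2\) explicitly rather than appealing abstractly to Proposition~\ref{pro:S-finite-equivalence}; but your version is perfectly fine as stated.
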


\begin{proof}
  Any two objects in \(\Cats(A;s,t)\) are dominated by another object because~\(\Cats\) is filtered and an arrow~\(\varphi\) as above exists for any~\(u\).  Let \((u_1,\varphi_1)\) and \((u_2,\varphi_2)\) be two parallel arrows in \(\Cats(A;s,t)\).  In the construction of the localisation--colocaliation exact sequence in~\cite{Inassaridze-Kandelaki-Meyer:Localisation_colocalisation}, it is shown that we may equalise \((u_1,\varphi_1)\) and~\((u_2,\varphi_2)\) by a morphism of exact triangles
  \[
  \xymatrix{
    A \ar[r]^t\ar@{=}[d]&
    A \ar[r]\ar[d]^g&
    A_{t} \ar[r]\ar[d]^{\psi}&
    A[1]\ar@{=}[d]\\
    A \ar[r]^{tg}&
    A \ar[r]&
    A_{tg} \ar[r]&
    A[1]
  }
  \]
  where~\(g\) is an \(S\)\nb-equivalence.  Hence there is~\(g'\) such that \(g'g=v\in\Zen\).  We may embed~\(g'\) in a morphism of exact triangles \((g',\psi')\).  Then the composite morphism \((v,\psi'\circ\psi)\) belongs to \(\Cats(A;t,tv)\) and equalises \((u_1,\varphi_1)\) and~\((u_2,\varphi_2)\).  Thus the category \(\Cats(A)\) is filtered.
\end{proof}

\begin{prop}
  \label{pro:torsion_from_finite}
  The colimit of the inductive system of extensions~\eqref{eq:homology_coeff_s} is the extension in Proposition~\textup{\ref{pro:torsion_Tor}},
  \[
  \Tor_0^\Zen(F_0(A);\ZenS/\Zen) \mono
  F_0(A;\ZenS/\Zen) \epi
  \Tor_1^\Zen(F_{-1}(A);\ZenS/\Zen).
  \]
\end{prop}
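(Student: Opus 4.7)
The plan is to pass to the filtered colimit of the short exact sequences~\eqref{eq:homology_coeff_s} over the filtered category~$\Cats(A)$ from Lemma~\ref{lem:filtered_Cats_A}, and to identify the result with the extension of Proposition~\ref{pro:torsion_Tor}. Since filtered colimits in~$\Ab$ are exact, the colimit sequence is still short exact, so the task reduces to identifying each of the three terms.

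The outer terms are immediate from~\eqref{eq:Tor_as_colimit} together with the formulas in~\eqref{eq:Tor}. The colimit of $\coker(s\colon F_0(A)\to F_0(A))$ is $\coker(F_0(A)\to S^{-1}F_0(A)) = \Tor_0^\Zen(F_0(A);\ZenS/\Zen)$, and symmetrically the colimit of $\ker(s\colon F_{-1}(A)\to F_{-1}(A))$ is $\Tor_1^\Zen(F_{-1}(A);\ZenS/\Zen)$. (The index category~$\Cats(A)$ is used, but the transition maps on the outer terms depend only on $u\in\Cats$, so the projection $\Cats(A)\to\Cats$ is final for these functors and the colimits agree with those over~$\Cats$.)

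For the middle term, I would set $F'_*(A)\defeq F_*(A;\ZenS/\Zen)$, which is homological on~$\Tri$ via the exact sequence~\eqref{eq:loc_coloc_sequence_fun}, and use Lemma~\ref{lem:finite_from_torsion} to rewrite $F_0(A;s)\cong F'_1(A_s)$ naturally. Applying the long exact sequence of~$F'$ to the cone triangle~\eqref{eq:natural_cone_triangle} in degree~$1$ gives a second inductive system of short exact sequences
\[
\coker\bigl(s\colon F'_1(A)\to F'_1(A)\bigr) \mono F'_1(A_s) \epi \ker\bigl(s\colon F'_0(A)\to F'_0(A)\bigr).
\]
Because $\ZenS/\Zen$ is itself an $S$-torsion module, Proposition~\ref{pro:torsion_Tor} forces $F'_*(A)$ to be $S$-torsion, so $S^{-1}F'_1(A)=0=S^{-1}F'_0(A)$. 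Passing to the filtered colimit via~\eqref{eq:Tor_as_colimit}, the left term collapses to~$0$ and the right term becomes~$F'_0(A)=F_0(A;\ZenS/\Zen)$, yielding a natural isomorphism $\varinjlim_s F_0(A;s)\cong F_0(A;\ZenS/\Zen)$.

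It remains to show that these three identifications assemble into an isomorphism of short exact sequences matching the one in Proposition~\ref{pro:torsion_Tor}. The Five Lemma reduces this to checking that the comparison map of short exact sequences restricts to the canonical inclusion $\Tor_0^\Zen(F_0(A);\ZenS/\Zen)\hookrightarrow F_0(A;\ZenS/\Zen)$ and the canonical surjection $F_0(A;\ZenS/\Zen)\twoheadrightarrow \Tor_1^\Zen(F_{-1}(A);\ZenS/\Zen)$ of Proposition~\ref{pro:torsion_Tor}. The main obstacle is this last diagram chase: one must compare the connecting maps of~$F$ and of~$F'$ on the triangle~\eqref{eq:natural_cone_triangle}, linked by the natural transformation $F'_{*+1}\Rightarrow F_*$ underlying Lemma~\ref{lem:finite_from_torsion}, and match these with the connecting maps of~\eqref{eq:loc_coloc_sequence_fun}. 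This is mechanical but requires care to set up correctly.
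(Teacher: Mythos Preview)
Your argument is correct in outline but takes a different route from the paper. The paper's proof is a one-step cofinality argument: it recalls that $F_*(A;\ZenS/\Zen)$ and the localisation--colocalisation long exact sequence are \emph{defined} in~\cite{Inassaridze-Kandelaki-Meyer:Localisation_colocalisation} as a filtered colimit over all exact triangles $N\to A\to B\to N[1]$ with $A\to B$ an $S$\nb-equivalence, and observes that the triangles~\eqref{eq:natural_cone_triangle} indexed by~$\Cats(A)$ form a cofinal subsystem (since every $S$\nb-equivalence out of~$A$ is dominated by some $s\cdot\id_A$). The colimit of the long exact sequences over~$\Cats(A)$ is then \emph{by construction} the localisation--colocalisation sequence, and splitting it into extensions gives the statement directly---no separate identification of the middle term and no compatibility check are needed.

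Your approach instead avoids reopening the definition from~\cite{Inassaridze-Kandelaki-Meyer:Localisation_colocalisation}: you identify the three terms individually using only results stated in this paper (\eqref{eq:Tor_as_colimit}, Lemma~\ref{lem:finite_from_torsion}, Proposition~\ref{pro:torsion_Tor}), and then must verify that the maps agree. This is more self-contained but longer, and the diagram chase you flag at the end is genuinely required. It does go through: the surjection $F_0(A;\ZenS/\Zen)\epi\Tor_1$ in Proposition~\ref{pro:torsion_Tor} \emph{is} the boundary map $F'_0(A)\to F_{-1}(A)$ of~\eqref{eq:loc_coloc_sequence_fun}, which is precisely the right-hand vertical arrow in the naturality square for $F'_{*+1}\Rightarrow F_*$ applied to $A_s\to A[1]$; the left-hand square is handled dually. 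The paper's cofinality argument buys you exactly this bookkeeping for free.
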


\begin{proof}
  Recall that the localisation--colocalisation exact sequence in~\cite{Inassaridze-Kandelaki-Meyer:Localisation_colocalisation} is constructed as a colimit over the filtered category of all triangles \(N\to A\to B\to N[1]\) where the map \(A\to B\) is an \(S\)\nb-equivalence.  We get the same colimit if we use the filtered category with object set~\(S\) and morphisms \(\Cats(A;s,t)\) because the arrows \(s\colon A\to A\) with \(s\in S\) are cofinal in the filtered category of \(S\)\nb-equivalences \(A\to B\).  This observation is already implicit in the proof of Lemma~\ref{lem:filtered_Cats_A}.  Therefore, the colimit of the long exact sequences from the above filtered subcategory yields the localisation--colocalisation long exact sequence.  Splitting this into extensions then yields the assertion of the proposition.
\end{proof}

\begin{rem}
  \label{rem:natural_cone_Cats}
  The filtered category \(\Cats(A)\) with morphism spaces \(\Cats(A;s,t)\) used above may be rather large because we allow all pairs \((u,\varphi)\) making~\eqref{eq:naturality_cone_s} commute.  In nice cases, we expect a canonical choice~\(\varphi_u\) for each~\(u\), coming from a natural cone construction in some category whose homotopy category is~\(\Tri\).  Even more, this natural choice should define a functor \(\Cats\to \Cats(A)\), \(u\mapsto (u,\varphi_u)\).  If these nice things happen, then we may replace the filtered category \(\Cats(A)\) above by the filtered subcategory~\(\Cats\) because colimits over \(\Cats\) and~\(\Cats(A)\) agree.
\end{rem}

\begin{lem}
  \label{lem:square_annihilates_finite_coef}
  Multiplication by~\(s^2\) vanishes on \(F_*(A;s)\).
\end{lem}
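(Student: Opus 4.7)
The plan is to show the stronger statement that $s^2 \cdot \id_{A_s} = 0$ already in~$\Tri$, where $A_s$ is the cone appearing in~\eqref{eq:natural_cone_triangle}. Since $F$ is \(\Zen\)\nb-linear and \(F_*(A;s) = F_*(A_s)\), multiplication by~\(s^2\) on \(F_*(A;s)\) equals \(F_*(s^2 \cdot \id_{A_s})\), so this suffices.

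First I would observe two vanishings of the endomorphism \(s \cdot \id_{A_s}\) derived from the defining exact triangle
\[
A \xrightarrow{s} A \xrightarrow{\iota} A_s \xrightarrow{\pi} A[1].
\]
Because \(s\) is central in~\(\Tri\), we have \((s\cdot \id_{A_s})\circ \iota = \iota \circ (s \cdot \id_A) = \iota \circ s\), which vanishes as the composition of two consecutive morphisms in an exact triangle. Dually, \(\pi \circ (s \cdot \id_{A_s}) = (s \cdot \id_{A[1]}) \circ \pi = s[1] \circ \pi\), which is zero by the exactness of the rotated triangle \(A \xrightarrow{\iota} A_s \xrightarrow{\pi} A[1] \xrightarrow{-s[1]} A[1]\).

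Next I would feed these vanishings into the long exact sequences obtained by applying the homological functors \(\Tri(\blank, A_s)\) and \(\Tri(A_s, \blank)\) to the triangle. The first yields an element \(\alpha\in \Tri(A[1], A_s)\) with \(s\cdot\id_{A_s} = \alpha\circ \pi\); the second yields \(\beta\in \Tri(A_s, A)\) with \(s\cdot \id_{A_s} = \iota \circ \beta\).

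Finally, combining these two factorisations,
\[
s^2 \cdot \id_{A_s} = (s\cdot \id_{A_s})\circ (s\cdot \id_{A_s}) = (\alpha\circ \pi)\circ (\iota \circ \beta) = \alpha \circ (\pi \circ \iota) \circ \beta = 0,
\]
because \(\pi\circ \iota=0\) as two consecutive maps in the exact triangle. There is no real obstacle here: everything is a routine triangulated-category calculation, and the only point that needs to be handled carefully is the compatibility of the \(\Zen\)\nb-action with the translation functor, which is implicit in the notion of an \(\Zen\)\nb-linear triangulated category.
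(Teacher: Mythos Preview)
Your argument is correct, and it actually proves the stronger statement \(s^2\cdot\id_{A_s}=0\) in~\(\Tri\); the lemma for any \(\Zen\)-linear (not even necessarily homological) functor~\(F\) then follows immediately.  The paper takes a different, more elementary route: it applies~\(F\) first and argues in abelian groups via the extension~\eqref{eq:homology_coeff_s}.  Since multiplication by~\(s\) obviously kills both \(\coker(s\colon F_*(A)\to F_*(A))\) and \(\ker(s\colon F_{*-1}(A)\to F_{*-1}(A))\), multiplication by~\(s^2\) kills the middle term \(F_*(A;s)\).  Your approach is the ``categorified'' version of this filtration argument: the factorisations \(s\cdot\id_{A_s}=\alpha\circ\pi=\iota\circ\beta\) are precisely what become, after applying~\(F\), the statements that \(s\cdot F_*(A_s)\) lands in the image of \(F_*(\iota)\) and is killed by \(F_*(\pi)\).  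The paper's proof is a two-line appeal to~\eqref{eq:homology_coeff_s}; yours is longer but yields the sharper conclusion at the level of~\(\Tri\), independent of any choice of~\(F\).
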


\begin{proof}
  It is clear that multiplication by~\(s\) vanishes on the cokernel and kernel of multiplication by~\(s\) on \(F_*(A)\).  Hence the assertion follows from the exact sequence~\eqref{eq:homology_coeff_s}.
\end{proof}

\begin{rem}
  \label{rem:order_finite_coef}
  Sometimes the extension~\eqref{eq:homology_coeff_s} splits unnaturally (see, for instance, \cite{Schochet:Top4}).  In those cases, already multiplication by~\(s\) vanishes on \(F_*(A;s)\).
\end{rem}

\begin{lem}
  \label{lem:exact_sequence_coefficient}
  Let \(s,t\in S\).  Then there is a long exact sequence
  \begin{multline*}
    \dotsb \to F_0(A;s) \to F_0(A;st) \to F_0(A;t)
    \\\to F_{-1}(A;s) \to F_{-1}(A;st) \to F_{-1}(A;t) \to \dotsb,
  \end{multline*}
  which is natural in~\(F\) with respect to natural transformations.
\end{lem}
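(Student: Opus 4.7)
The plan is to obtain the desired exact sequence as the image under the homological functor $F$ of a suitable exact triangle in $\Tri$ that relates the three cones $A_s$, $A_{st}$, and $A_t$. Such a triangle is produced by the octahedral axiom applied to the composition $A \xrightarrow{s} A \xrightarrow{t} A$.

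More concretely, I would factor the endomorphism $st\cdot\id_A$ as the composite of the two endomorphisms $s\cdot\id_A$ and $t\cdot\id_A$. The octahedral axiom then provides an exact triangle
\[
C_s \to C_{st} \to C_t \to C_s[1],
\]
where $C_s$, $C_t$, and $C_{st}$ are cones of $s\cdot\id_A$, $t\cdot\id_A$, and $st\cdot\id_A$ respectively. Because any two cones of a given morphism are isomorphic in a triangulated category, we may identify $C_s$, $C_t$, $C_{st}$ with the chosen functorial cones $A_s$, $A_t$, $A_{st}$. Applying the homological functor $F$ to the resulting exact triangle gives a long exact sequence whose terms are, by definition, $F_*(A;s)$, $F_*(A;st)$, and $F_*(A;t)$, as required.

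Naturality in $F$ requires no extra work: once the exact triangle $A_s \to A_{st} \to A_t \to A_s[1]$ is fixed, the associated long exact sequence is functorial in the homological functor one applies. Thus any natural transformation $F \Rightarrow G$ induces a morphism between the two long exact sequences, which is what the statement asserts.

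The only mildly delicate point is the identification of the cones produced by the octahedral axiom with the chosen functorial cones $A_s$, $A_t$, $A_{st}$; however, this identification is canonical after applying $F$, since $F$ sends isomorphic objects to isomorphic groups and since all the extension data~\eqref{eq:homology_coeff_s} is fixed by $F$ applied to the triangle~\eqref{eq:natural_cone_triangle}. No additional naturality assumption on the cone construction beyond the one already made before Lemma~\ref{lem:finite_from_torsion} is needed, so the argument is essentially a one-line application of the octahedral axiom followed by the long exact sequence of a homological functor.
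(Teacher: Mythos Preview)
Your proposal is correct and follows essentially the same approach as the paper: both apply the octahedral axiom to the factorisation \(A \xrightarrow{s} A \xrightarrow{t} A\) of \(st\cdot\id_A\), use uniqueness of cones to identify the resulting objects with the functorial cones \(A_s\), \(A_{st}\), \(A_t\), and then apply~\(F\) to the exact triangle \(A_s \to A_{st} \to A_t \to A_s[1]\). The paper spells out the full octahedral diagram explicitly, while you summarise it in words, but the argument is the same.
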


\begin{proof}
  The octahedral axiom for the maps \(s,t,st\colon A\to A\) yields a commuting diagram with exact rows and columns
  \[
  \xymatrix{
    A\ar[r]^s\ar@{=}[d]&
    A\ar[d]^t\ar[r]&
    A_{s}\ar[r]\ar[d]&
    A[1]\ar@{=}[d]\\
    A\ar[r]^{st} \ar[d]&
    A\ar[r] \ar[d]&
    A_{st}\ar[d] \ar[r]&
    A[1]\ar[d]\\
    0\ar[d]\ar[r]&
    A_{t}\ar@{=}[r]\ar[d]&
    A_{t}\ar[r]\ar[d]&
    0 \ar[d]\\
    A[1]\ar[r]^{s[1]}&
    A[1]\ar[r]&
    A_{s}[1]\ar[r]&
    A[2].
  }
  \]
  Here we use the uniqueness of cones to identify the objects \(A_{s}\), \(A_{t}\), and~\(A_{st}\) in the diagram.  Apply the functor~\(F\) to the third row to get the asserted long exact sequence for homology with coefficients.
\end{proof}

\begin{cor}
  \label{cor:iso_colocalisation}
  Let \(\Phi\colon F\Rightarrow G\) be a natural transformation between two homological functors on~\(\Tri\).  Let \(S_0\subseteq S\) be a generating subset, that is, any element of~\(S\) is a product of elements in~\(S_0\).

  Then~\(\Phi\) induces an invertible map \(F(A;\ZenS/\Zen)\to G(A;\ZenS/\Zen)\) for all~\(A\) if and only if~\(\Phi\) induces invertible maps \(F(A;s)\to G(A;s)\) for all \(s\in S_0\) and all objects~\(A\) of~\(\Tri\).
\end{cor}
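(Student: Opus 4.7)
The plan is to combine Lemma~\ref{lem:finite_from_torsion} for the forward direction with Lemma~\ref{lem:exact_sequence_coefficient} and Proposition~\ref{pro:torsion_from_finite} for the reverse direction; given these three results the proof is essentially a five-lemma-plus-colimit argument.

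For the forward implication, assume \(\Phi\) induces an isomorphism \(F(\blank;\ZenS/\Zen)\to G(\blank;\ZenS/\Zen)\). For any object~\(A\) and any \(s\in S\), instantiate this isomorphism at the cone object~\(A_{s}\in\Tri\) to obtain an isomorphism \(F_{*+1}(A_{s};\ZenS/\Zen)\to G_{*+1}(A_{s};\ZenS/\Zen)\). Lemma~\ref{lem:finite_from_torsion} identifies this natural transformation with \(F_*(A;s)\to G_*(A;s)\), so \(\Phi\) is invertible on \(F(\blank;s)\) for every \(s\in S\), in particular for every \(s\in S_0\).

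For the reverse implication, the first step is to upgrade the hypothesis from~\(S_0\) to all of~\(S\). Given \(s,t\in S\), Lemma~\ref{lem:exact_sequence_coefficient} supplies a long exact sequence linking \(F(\blank;s)\), \(F(\blank;st)\), and \(F(\blank;t)\), and it is natural in~\(F\); hence \(\Phi\) induces a morphism between the corresponding long exact sequences for \(F\) and~\(G\). If \(\Phi\) is invertible on \(F(\blank;s)\) and \(F(\blank;t)\) in all degrees, the five lemma yields invertibility on \(F(\blank;st)\). Because every element of~\(S\) is a finite product of elements of~\(S_0\), an induction on the length of such a factorisation produces the invertibility of \(\Phi\) on \(F(\blank;s)\) for every \(s\in S\). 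The second step is to pass to the colocalisation: Proposition~\ref{pro:torsion_from_finite} exhibits \(F(A;\ZenS/\Zen)\) as the filtered colimit over~\(\Cats(A)\) of the extensions~\eqref{eq:homology_coeff_s}, and the same for~\(G\). Naturality of~\(\Phi\) turns it into a morphism of inductive systems, invertible at every stage by the preceding step, so the induced map on colimits is an isomorphism.

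The only point requiring a little care is the five-lemma step, where invertibility must be asserted in every degree simultaneously; this is not actually a real obstacle, since Lemma~\ref{lem:exact_sequence_coefficient} is stated in every degree and the inductive hypothesis ``\(\Phi\) is invertible on \(F(\blank;s)\)'' is naturally a \(\Z\)-graded statement. A second small check — that the maps in the inductive system of Proposition~\ref{pro:torsion_from_finite} are compatible with~\(\Phi\) despite the system depending on~\(A\) — is automatic because the transition maps are built from morphisms in~\(\Tri\) and~\(\Phi\) is natural on~\(\Tri\).
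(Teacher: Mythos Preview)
Your proof is correct and follows essentially the same approach as the paper: both use Lemma~\ref{lem:exact_sequence_coefficient} with the Five Lemma to pass from~\(S_0\) to~\(S\) and then Proposition~\ref{pro:torsion_from_finite} for the colimit in one direction, and Lemma~\ref{lem:finite_from_torsion} applied at the cone objects~\(A_s\) in the other. The only cosmetic difference is that the paper treats the reverse implication first and phrases the forward implication as replacing~\(\Phi\) by the induced transformation~\(\Phi'\) between the colocalised functors, but the content is identical to your argument.
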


\begin{proof}
  Assume first that the maps \(\Phi_{A,s}\colon F(A;s)\to G(A;s)\) are invertible for all \(s\in S_0\).  Let~\(S_1\) be the set of all \(s\in S\) for which~\(\Phi_{A,s}\) is invertible.  Lemma~\ref{lem:exact_sequence_coefficient} and the Five Lemma show that~\(S_1\) is closed under multiplication.  Hence~\(\Phi_{A,s}\) is invertible for all \(s\in S\).  Then Proposition~\ref{pro:torsion_from_finite} shows that~\(\Phi\) induces an invertible map \(F(A;\ZenS/\Zen)\to G(A;\ZenS/\Zen)\).

  For the converse implication, Lemma~\ref{lem:finite_from_torsion} allows us to replace~\(\Phi\) by the induced natural transformation~\(\Phi'\) between the functors
  \[
  F'_*(A) \defeq F_{*+1}(A;\ZenS/\Zen),\qquad
  G'_*(A) \defeq G_{*+1}(A;\ZenS/\Zen)
  \]
  because \(\Phi\) and~\(\Phi'\) induce equivalent maps between finite coefficient theories.  If~\(\Phi'\) is an isomorphism for all~\(A\), it is one for~\(A_{s}\), so that~\(\Phi_{A,s}\) is invertible.
\end{proof}

\section{Application to Kasparov theory}
\label{sec:applications}

Now we apply the general theory developed above to equivariant Kasparov theory, viewed as a triangulated category (see~\cite{Meyer-Nest:BC}).  We only consider central localisations where~\(\Zen\) is the ring~\(\Z\) of integers, as in Example~\ref{exa:integers}.  Finer information may be obtained by considering the larger ring \(\Rep(G)\) instead (see Example~\ref{exa:KKG_RepG}), but we leave this to future investigation.

Let us first consider the \emph{rational \(\KK^G\)-theory}.  Here \(S=\Z\setminus\{0\}\) and \(S^{-1}\Z=\Q\).  Following the definitions above, we define
\begin{equation}
  \label{eq:KK_rational}
  \KK^G_n(A,B;\Q)\defeq \KK^G_n(A,B)\otimes \Q,
\end{equation}
where \(A\) and~\(B\) are \(G\)\nb-\(\Cst\)-algebras.

This differs from the definition in \cite{Blackadar:K-theory}*{Exercise 23.15.6}, where rational \(\KK\)-theory for complex \(\Cst\)\nb-algebras is defined to be \(\KK_n(A,B \otimes D_\Q)\) for a \(\Cst\)\nb-algebra~\(D_\Q\) in the bootstrap class with \(\K_0(D_\Q) = \Q\) and \(\K_1(D_\Q) = 0\).  The example
\[
\KK_0(D_\Q,\C\otimes D_\Q) \cong\Q
\qquad\text{and}\qquad
\KK_0(D_\Q;\C)\otimes \Q = 0
\]
shows that the two theories are different.  In \(\KK(\blank,\blank;\Q)\), the \(\Cst\)\nb-algebras \(\C\) and~\(D_\Q\) are not isomorphic because there is no morphism from~\(D_\Q\) to~\(\C\).  The map \(\C\to D_\Q\) corresponding to the embedding \(\Z\to\Q\) is not an \(S\)\nb-equivalence.  Its cone is the suspension of a \(\Cst\)\nb-algebra~\(D_{\Q/\Z}\) with \(\K_0(D_{\Q/\Z}) = \Q/\Z\) and \(\K_1(D_{\Q/\Z}) = 0\).  This \(\Cst\)\nb-algebra is not \(S\)\nb-finite, so that \(\KK_0(D_{\Q/\Z},D_{\Q/\Z};\Q) \neq0\).  But \(D_{\Q/\Z} \otimes D_\Q\cong0\) in \(\KKcat\) because this tensor product belongs to the bootstrap category and its \(\K\)\nb-theory \(\K_*(D_{\Q/\Z} \otimes D_\Q) \cong \Q/\Z\otimes \Q\) vanishes.

The definition of \(\KK(A,B;\Q)\) above yields again a triangulated category.  This is crucial to apply methods from stable homotopy theory and homological algebra.  Does the definition in~\cite{Blackadar:K-theory} also yield a triangulated category?  This seems unclear if we consider all of \(\KKcat\); but we get a positive answer in the bootstrap class.  For~\(A\) in the bootstrap class, the Universal Coefficient Theorem yields
\[
\KK_0(A, B\otimes D_\Q) \cong \Hom(\K_*(A), \K_*(B)\otimes\Q) \cong
\Hom_\Q(\K_*(A)\otimes\Q, \K_*(B)\otimes\Q)
\]
because Abelian groups of the form \(\K_*(B)\otimes\Q\) are injective.  Hence the bootstrap class with these morphisms is equivalent to the category of countable \(\Q\)\nb-vector spaces.  This category is triangulated and Abelian at the same time.  And we may also view it as the localisation of~\(\KKcat\) at the class of \(\Cst\)\nb-algebras with vanishing rational \(\K\)\nb-theory \(\K_*(\blank)\otimes\Q\).  But this observation depends on an explicit computation of the category.  Once we go beyond the bootstrap category or consider equivariant situations, it is no longer clear whether the definition in~\cite{Blackadar:K-theory} yields a triangulated category.

More generally, if~\(S\) is any multiplicatively closed subset of~\(\Z\), then we may replace~\(\Q\) by~\(S^{-1}\Z\) and define
\[
\KK^G_*(A,B;S^{-1}\Z) = S^{-1} \KK^G_*(A,B) \defeq \KK^G_*(A,B) \otimes S^{-1}\Z.
\]
By our general theory, these groups form the morphism spaces of an \(S^{-1}\Z\)-linear triangulated category.  It is the localisation of \(\KKcat^G\) at the class of \(S\)\nb-finite \(G\)\nb-\(\Cst\)-algebras.  Here~\(A\) is \(S\)\nb-finite if and only if there is \(s\in S\) with \(s\cdot\id_A=0\).

The colocalisation also produces an \emph{\(S\)\nb-torsion theory} \(\KK^G_*(A,B;S^{-1}\Z/\Z)\) that fits into a natural long exact sequence
\begin{multline}
  \label{eq:torsion_rational_KK}
  \dotsb \to \KK^G_{n+1}(A,B)
  \to \KK^G_{n+1}(A,B;S^{-1}\Z)
  \to \KK^G_{n+1}(A,B;S^{-1}\Z/\Z)
  \\\to \KK^G_{n}(A,B)
  \to \KK^G_{n}(A,B;S^{-1}\Z)
  \to \KK^G_{n}(A,B;S^{-1}\Z/\Z)
  \to \dotsb.
\end{multline}
In particular, this includes a \emph{torsion theory} \(\KK^G_*(A,B;\Q/\Z)\).

The \(S\)\nb-rational and \(S\)\nb-torsion theories inherit basic properties like homotopy invariance, \(\Cst\)\nb-stability, excision and Bott periodicity from \(\KK^G\).  All this is contained in the statement that they are bifunctors on \(\KKcat^G\) that are homological in the first and cohomological in the second variable.  Furthermore, the maps in~\eqref{eq:torsion_rational_KK} are natural transformations.  Since the \(S\)\nb-rational theory is again a triangulated category, we get an associative product
\[
\KK^G_n(A,B;S^{-1}\Z)\otimes_{S^{-1}\Z} \KK^G_m(B,C;S^{-1}\Z) \to\KK^G_{n+m}(A,C;S^{-1}\Z).
\]
It is the product of the natural isomorphism
\begin{multline*}
  (\KK^G_n(A,B)\otimes S^{-1}\Z)\otimes_{S^{-1}\Z} (\KK^G_n(B,C)\otimes S^{-1}\Z)\\
  \cong \bigl(\KK^G_n(A,B)\otimes \KK^G_n(B,C)\bigr)\otimes S^{-1}\Z
\end{multline*}
and the homomorphism
\[
\KK^G_n(A,B)\otimes \KK^G_n(B,C)\otimes S^{-1}\Z \to
\KK^G_n(A,C)\otimes S^{-1}\Z
\]
induced by the Kasparov product.

As for the rational theory, the torsion \(\KK\)-theory \(\KK(A,B;\Q/\Z)\) is, in general, not isomorphic to \(\KK_*(A,B\otimes D_{\Q/\Z})\).

As in Section~\ref{sec:finite_homology}, we now realise the \(S\)\nb-torsion theory as a filtered colimit of finite coefficient Kasparov theories for \(s\in S\).  These functors have already been studied by Claude Schochet~\cite{Schochet:Top4}.  This is a nice case in the sense of Remark~\ref{rem:natural_cone_Cats}, that is, we get canonical natural transformations \(A_s \Rightarrow A_t\) for \(s,t\in S\) with \(s\mid t\).

Let~\(\Sphere^1\) be the unit circle in~\(\C\) with base point \(*=1\in\Sphere^1\).  The map
\[
\tilde{q}\colon \Sphere^1\to \Sphere^1,\qquad x\mapsto x^q,
\]
for \(q\in\N_{\ge2}\) is called \emph{standard \(q\)th power map}.  It preserves the base point.  Let \(\Cont_0(\Sphere^1) \cong \Cont_0(\R)\) be the \(\Cst\)\nb-algebra of continuous functions on~\(\Sphere^1\) vanishing at~\(*\).  The map~\(\tilde{q}\) induces a \(*\)\nb-homomorphism
\[
\hat{q}\colon \Cont_0(\Sphere^1)\to\Cont_0(\Sphere^1),\qquad
f(s)\mapsto f(s^q).
\]
Let~\(C_q\) be the mapping cone \(\Cst\)\nb-algebra of~\(\hat{q}\):
\[
C_q \defeq \{(x,f)\in \Cont_0(\Sphere^1)\oplus
\Cont_0(\Sphere^1)\otimes \Cont_0[0,1) \mid \hat{q}(x)=f(0)\}.
\]
Up to a desuspension, we may define the finite coefficient theories by
\[
A_q \defeq A \otimes C_q[-2],\qquad
F_*(A;q) \defeq F_{*-2}(A\otimes C_q).
\]
In particular, we define
\[
\KK^G_n(A,B;\Z/q) \defeq \KK^G_{n-2}(A,B\otimes C_q),
\]

In the complex case, the Puppe exact sequence and Bott periodicity imply \(\K_0(C_q) \cong\Z/q\) and \(\K_1(C_q) \cong0\).  Furthermore, \(C_q\) belongs to the bootstrap class and is the unique object of the bootstrap class with this \(\K\)\nb-theory.

By definition, \(C_q\) fits into a pull-back diagram
\[
\xymatrix{
  C_q\ar[r]\ar[d]^{q_0} & \Cont_0(\Sphere^1)\otimes \Cont_0[0,1)\ar[d]^{e_0}\\
  \Cont_0(\Sphere^1)\ar[r]^{\hat{q}}&\Cont_0(\Sphere^1).
}
\]
If~\(q\) divides~\(p\), then we get a natural commuting diagram
\[
\xymatrix{
  C_q\ar[r]\ar[d]_{\Theta_{p,q}}&\Cont_0(\Sphere^1)\ar[r]^{\hat{q}}\ar @{=}[d]&
  \Cont_0(\Sphere^1)\ar[d]^{\widehat{p/q}}\\
  C_p\ar[r]&\Cont_0(\Sphere^1)\ar[r]^{\hat{p}}&\Cont_0(\Sphere^1)
}
\]
We get \(\Theta_{p,r} \Theta_{r,q}=\Theta _{p,q}\) if \(q \mid r \mid p\) by \cite{Schochet:Top4}*{Proposition 2.1(3)}.  This provides a functor \(\Cats\to\KK^G\), \(s\mapsto A_s\), \(p/q\mapsto \Theta_{p,q}\).  We get a natural isomorphism
\[
\KK^G_n(A,B;S^{-1}\Z/\Z) \cong \varinjlim_{\Cats} \KK^G_n(A,B;\Z/s),
\]
that is, we may replace the filtered category~\(\Cats(A)\) that is used in Section~\ref{sec:finite_homology} by the much smaller category~\(\Cats\).  Furthermore, it can be checked that the maps \(F_*(A;s) \to F_*(A;st)\) in Lemma~\ref{lem:exact_sequence_coefficient} may be chosen to be the maps induced by~\(\Theta_{s,st}\).

The finite coefficient theory is related to \(\KK^G\) by a natural exact sequence
\begin{multline*}
  \dotsb
  \to \KK^G_n(A,B)
  \xrightarrow{q} \KK^G_n(A,B)
  \to \KK^G_n(A,B;\Z/q)
  \\\to \KK^G_{n-1}(A,B)
  \xrightarrow{q} \KK^G_{n-1}(A,B)
  \to \KK^G_{n-1}(A,B;\Z/q)
  \to \dotsb
\end{multline*}
for \(q\in S\) by~\eqref{eq:homology_coeff_s}; here~\(q\) denotes the map of multiplication by~\(q\).  Similarly, \eqref{eq:homology_coeff_s_tor} yields a natural exact sequence
\begin{multline*}
  \dotsb \to \KK^G_n(A,B;\Z/q)
  \to \KK^G_n(A,B;S^{-1}\Z/\Z)
  \xrightarrow{q} \KK^G_n(A,B;S^{-1}\Z/\Z)\to
  \\ \KK^G_{n-1}(A,B;\Z/q)
  \to \KK^G_{n-1}(A,B;S^{-1}\Z/\Z)
  \xrightarrow{q} \KK^G_{n-1}(A,B;S^{-1}\Z/\Z)
  \to \dotsb.
\end{multline*}
Lemma~\ref{lem:exact_sequence_coefficient} provides natural exact sequences
\begin{multline*}
  \dotsb \to \KK^G_n(A,B;\Z/p)
  \to \KK^G_n(A,B;\Z/pq)
  \to \KK^G_n(A,B;\Z/q)
  \\\to \KK^G_{n-1}(A,B;\Z/p)
  \to \KK^G_{n-1}(A,B;\Z/pq)
  \to \KK^G_{n-1}(A,B;\Z/q)
  \to \dotsb
\end{multline*}
for all \(p,q\in\Z\), see also~\cite{Schochet:Top4}*{Proposition 2.6.(1)}.  The maps in this sequence come from a natural exact triangle \(C_p \to C_{pq} \to C_q \to C_p[1]\)

The \emph{exponent} of a group~\(G\) is the smallest \(a\in\N_{\ge1}\) with \(a\cdot g=0\) for all \(g\in G\).  At first sight, it may seem plausible that the exponent of \(\KK^G_*(A,B;\Z/q)\) should divide~\(q\).  There is, however, an obstruction to this for real \(\Cst\)\nb-algebras.  The issue is discussed in \cite{Schochet:Top4}*{Proposition 2.4}.  Complex \(\KK\)\nb-theory is good in the sense of~\cite{Schochet:Top4} because the \(\KK\)-class of the Hopf map \(\Sphere^3\to\Sphere^2\) belongs to the group \(\KK_0\bigl(\Cont_0(\R^2),\Cont_0(\R^3)\bigr)=0\).  As a consequence, the exact sequence~\eqref{eq:homology_coeff_s} for complex \(\KK\)\nb-theory splits unnaturally and, in particular, multiplication by~\(q\) vanishes on \(\KK^G_*(A,B;\Z/q)\) for all \(q\in\Z\).  This also follows easily from the isomorphism \(\KK_0(C_q,C_q) \cong \Z/q\), which follows from the Universal Coefficient Theorem.

Now consider real \(\Cst\)\nb-algebras, denote their \(\KK\)-theory by \(\KKO\).  The extension in~\eqref{eq:homology_coeff_s} for \(\KKO^G_*(A,B)\) is equivalent to an extension
\begin{equation}
  \label{eq:KKO_with_coef_UCT}
  \KKO^G_*(A,B) \otimes \Z/q \mono \KKO^G_*(A,B;\Z/q) \epi \Tor(\KKO^G_{*-1}(A,B), \Z/q)  
\end{equation}
because
\[
H \otimes \Z/q \cong \coker (q\colon H\to H),\qquad
\Tor(H,\Z/q) \cong \ker (q\colon H\to H).
\]
The above extension splits unnaturally for odd~\(q\) by \cite{Schochet:Top4}*{Proposition 2.4}, so that \(\KKO^G_*(A,B;\Z/q)\) has exponent~\(q\).  But since the class of the Hopf map is the generator of \(\KKO_0\bigl(\Cont_0(\R^2),\Cont_0(\R^3)\bigr) \cong \Z/2\), the homology theory \(\KKO^G_*(A,\blank)\) is not good.  Hence the above extension need not split for even~\(q\).  Instead, general arguments from homotopy theory show that the exponent of \(\KKO^G_*(A,B;\Z/q)\) divides~\(2q\) for all even~\(q\), and it divides~\(q\) if \(4\mid q\).  We only explain why~\(2q\) annihilates \(\KKO^G_*(A,B;\Z/q)\) for even~\(q\); the stronger assertion for \(4\mid q\) is more difficult.

We assume that~\(q\) is even.  Since \(\KKO_{0}(C_{q},C_{q})=\KKO_{0}(C_q,\R;\Z/q)\), Equation~\eqref{eq:KKO_with_coef_UCT} yields exact sequences
\[
0 \to \KKO_{0}(C_q,\R)\otimes \Z/q\to \KKO_{0}(C_q,C_q)\to
\Tor(\KKO_{-1}(C_q,\R),\Z/q)\to 0
\]
for all \(q>1\).  Since \(\KKO_1(\R,\R)=\Z/2\), \(\KKO_0(\R,\R)=\Z\) and \(\KKO_{-1}(\R,\R)=0\), we get a long exact sequence
\[
\dotsb\to \Z/2\xrightarrow{q} \Z/2\to \KKO_0(C_q,\R)\to
\Z\xrightarrow{q} \Z\to \KKO_{-1}(C_q,\R)\to 0\to\dotsb
\]
Since \(q\colon \Z\to \Z\) is a monomorphism, we have exact sequences
\[
0\to \Z\xrightarrow{q} \Z\to \KKO_{-1}(C_q,\R)\to 0
\]
and
\[
\Z/2\xrightarrow{q} \Z/2\to \KKO_0(C_q,\R) \to 0
\]
Thus \(\KKO_{-1}(C_q,\R)=\Z/q\) and \(\KKO_0(C_q,\R)=\Z_2\) because~\(q\) is even.  Plugging this into the UCT exact sequence yields
\[
0\to \Z/2\otimes \Z/q\to \KKO_{0}(C_q,C_q)\to \Tor(\Z/q,\Z/q)\to 0.
\]
We have \(\Tor(\Z/q,\Z/q)=\Z/q\) and \(\Z/2\otimes \Z/q\cong \Z/2\) because~\(q\) is even.  Therefore, \(\KKO_{0}(C_q,C_q)\) is annihilated by~\(2q\).

Since \(\KKO^G(A,B; \Z/q)\) is a module over \(\KKO_0(C_q,C_q)\), it is annihilated by~\(q\) for odd~\(q\) and by~\(2q\) for odd~\(q\).

\subsection{Look at the Baum--Connes Conjecture}
\label{sec:BC}

The localisation--colocalisation exact sequence allows to reduce computations in \(\KKcat^G\) to computations in the \(S\)\nb-rational and \(S\)\nb-torsion variants of \(\KK^G\).  The computation for \(S\)\nb-torsion invariants, in turn, reduces to computations for \(\KK^G\) with finite coefficients.  Here we explain what this means for the Baum--Connes conjecture.  However, we know of no concrete application where this reduction of the problem would help to settle this conjecture.

The formulation of the Baum--Connes Conjecture involves the groups \(\K^\topo_n(G,A)\), called the \emph{topological \(\K\)\nb-theory of~\(G\) with coefficients in~\(A\)}, and a natural transformation
\[
\mu_A \colon  \K^\topo_*(G,A)\to \K_*(G \ltimes_\red A),
\]
called the \emph{Baum--Connes assembly map}.  The \emph{Baum--Connes Conjecture} for~\(G\) with coefficients in~\(A\) asserts that~\(\mu_A\) is an isomorphism.  Note that \(\K^\topo_*(G,\blank)\) and \(\K_*(G\ltimes_\red\blank)\) are homology theories of the kind studied in Section~\ref{sec:finite_homology} (see also~\cite{Meyer-Nest:BC}).  Therefore, the constructions above yield \(S\)\nb-rational, finite, and \(S\)\nb-torsion versions of the Baum--Connes assembly map:
\begin{alignat*}{2}
  \mu_A^{S^{-1}\Z} &\colon&
  \K^\topo_*(G,A; S^{-1}\Z) &\to
  \K_*(G \ltimes_\red A; S^{-1}\Z),\\
  \mu_A^{\Z/q} &\colon&
  \K^\topo_*(G,A;\Z/q) &\to \K_*(G \ltimes_\red A;\Z/q),\\
  \mu_A^{S^{-1}\Z/\Z} &\colon&
  \K^\topo_*(G,A;S^{-1}\Z/\Z) &\to \K_*(G \ltimes_\red A;S^{-1}\Z/\Z).
\end{alignat*}

\begin{thm}
  \label{bcceq}
  The following conjectures are equivalent:
  \begin{enumerate}[label=\textup{(\arabic{*})}]
  \item the Baum--Connes Conjecture with coefficients;
  \item the \(S\)\nb-rational and \(S\)\nb-torsion Baum--Connes Conjectures with coefficients;
  \item the \(S\)\nb-rational and \(q\)\nb-finite Baum--Connes Conjectures with coefficients for all primes \(q\in S\).
  \end{enumerate}
\end{thm}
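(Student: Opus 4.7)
The proof is a direct application of the general localisation--colocalisation machinery of Section \ref{sec:central_localisation} to the assembly map. Set \(F(A) \defeq \K^\topo_*(G,A)\) and \(G(A) \defeq \K_*(G\ltimes_\red A)\); both are homological functors on \(\KKcat^G\) by the framework of~\cite{Meyer-Nest:BC}, and let \(\Phi\colon F \Rightarrow G\) denote the assembly map~\(\mu\).  By construction the three coefficient versions of the assembly map are respectively the localisation, colocalisation, and finite-coefficient version of \(\Phi\): we have \(S^{-1}\Phi = \mu^{S^{-1}\Z}\), \(\Phi(\blank;S^{-1}\Z/\Z) = \mu^{S^{-1}\Z/\Z}\), and \(\Phi(\blank;q) = \mu^{\Z/q}\).

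The equivalence (1)\(\iff\)(2) is then an immediate application of Proposition~\ref{pro:nat_trafo_loc_coloc} to~\(\Phi\): the invertibility of \(\Phi_A = \mu_A\) for all~\(A\) is equivalent to the conjunction of invertibility of its localisation \(\mu^{S^{-1}\Z}_A\) and of its colocalisation \(\mu^{S^{-1}\Z/\Z}_A\), for all~\(A\).

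For (2)\(\iff\)(3), it remains by the previous step to show that \(\mu^{S^{-1}\Z/\Z}_A\) is an isomorphism for all~\(A\) if and only if \(\mu^{\Z/q}_A\) is an isomorphism for all~\(A\) and all primes \(q\in S\).  Taking \(S_0 \defeq \{q\in S \mid q \text{ prime}\}\), this is exactly Corollary~\ref{cor:iso_colocalisation} applied to \(\Phi\), provided that \(S_0\) is a generating subset of~\(S\).  In the situation of interest, \(S\) is the multiplicative closure of a set of primes as in Example~\ref{exa:integers}, so \(S_0\) generates~\(S\) and the hypothesis of Corollary~\ref{cor:iso_colocalisation} is satisfied.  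This check on~\(S\) is the only nontrivial bookkeeping point; once it is in hand, the theorem reduces entirely to quoting the two general results established earlier, with no genuine obstacle to overcome.
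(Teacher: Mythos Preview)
Your proof is correct and follows exactly the approach of the paper, which simply states that the theorem follows immediately from Proposition~\ref{pro:nat_trafo_loc_coloc} and Corollary~\ref{cor:iso_colocalisation}. Your write-up just fills in the routine details of how these two results are applied (one minor quibble: you overload the symbol~\(G\) for both the group and the target functor, so pick a different letter for the latter).
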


\begin{proof}
  This follows immediately from Proposition~\ref{pro:nat_trafo_loc_coloc} and Corollary~\ref{cor:iso_colocalisation}.
\end{proof}

\subsection{Real versus complex Kasparov theory}
\label{sec:real_complex_KK}

To illustrate the usefulness of localisation, we reformulate some well-known results about the relationship between real and complex Kasparov theory and \(\K\)\nb-theory.  Roughly speaking, these two theories become equivalent when we localise at~\(2\), that is, work with \(\Z[\nicefrac12]\)-coefficients.  The results in this section are due to Max Karoubi~\cite{Karoubi:Descent} and Thomas Schick~\cite{Schick:Real_complex}.

In~\cite{Schick:Real_complex}, Thomas Schick related the \(\KK\)-theories of two real \(\Cst\)-algebras \(A\) and~\(B\) and their complexifications \(A_\C\) and~\(B_\C\) by an exact sequence
\begin{multline}
  \label{eq:real_complex_sequence}
  \dotsb\to \KKO^\Gamma_{n-1} (A,B)
  \xrightarrow{\chi} \KKO^\Gamma_n(A,B)
  \xrightarrow{c} \KK^\Gamma_n(A_\C,B_\C)
  \\\xrightarrow{\delta} \KKO^\Gamma_{n-2}(A,B)
  \xrightarrow{\chi} \KKO^\Gamma_{n-1}(A,B)
  \xrightarrow{c} \KK^\Gamma_{n-1}(A_\C,B_\C)
  \to\dotsb,
\end{multline}
extending previous results for real and complex \(\K\)\nb-theory, \(\KO\) and~\(\K\).  In~\cite{Schick:Real_complex}, \(\Gamma\) is assumed to be a discrete group, but the same arguments work if~\(\Gamma\) is replaced by a locally compact group or even groupoid; \(A\) and~\(B\) are separable real \(\Gamma\)\nb-\(\Cst\)-algebras; \(\chi\) is given by Kasparov product with the generator of \(\KKO^\Gamma_{1}(\R,\R)=\Z/2\); \(c\) is the complexification functor; and~\(\delta\) is the composition of the inverse of the complex Bott periodicity isomorphism with ``forgetting the complex structure.''

The proof of \cite{Schick:Real_complex}*{Corollary 2.4} shows that, after inverting~\(2\), the long exact sequence above becomes a \emph{naturally split} short exact sequence
\[
\KKO^\Gamma_n(A,B)\otimes\Z[\nicefrac12]
\overset{c}{\mono} \KK^\Gamma _n (A_\C,B_\C)\otimes\Z[\nicefrac12]
\overset{\delta}{\epi} \KKO^\Gamma_{n-2}(A,B)\otimes\Z[\nicefrac12].
\]
In our notation, this yields a natural isomorphism
\[
\KK^\Gamma _n (A_\C,B_\C; \Z[\nicefrac12])
\cong \KKO^\Gamma_n(A,B;\Z[\nicefrac12]) \oplus \KKO^\Gamma_{n-2}(A,B;\Z[\nicefrac12]).
\]
Besides the exactness of~\eqref{eq:real_complex_sequence}, the proof uses two observations.  First, \(2\chi=0\), so that~\eqref{eq:real_complex_sequence} splits into short exact sequences after tensoring with \(\Z[\nicefrac12]\).  Secondly, the map
\[
\KKO^\Gamma_{n-2}(A,B) \xrightarrow{c}
\KK^\Gamma_{n-2}(A_\C,B_\C) \cong
\KK^\Gamma_{n}(A_\C,B_\C) \xrightarrow{\delta}
\KKO^\Gamma_{n-2}(A,B),
\]
where the middle isomorphism is Bott periodicity, is multiplication by~\(2\) (\cite{Schick:Real_complex}*{Lemma 3.9}).  Hence~\(c\) provides a natural section for the resulting extensions, up to inverting~\(2\).  More generally, the same argument yields:

\begin{thm}
  Let~\(G\) be a second countable locally compact group, let \(A\) and~\(B\) be separable real \(G\)\nb-\(\Cst\)-algebras.  There is a natural isomorphism
  \[
  \KK^\Gamma _n (A_\C,B_\C; H)
  \cong \KKO^\Gamma_n(A,B; H) \oplus \KKO^\Gamma_{n-2}(A,B; H)
  \]
  for the following coefficients:
  \begin{enumerate}[label=\textup{(\arabic{*})}]
  \item \(H=S^{-1}\Z\) with \(2\in S\) \textup(localisation\textup);
  \item \(H=\Z/s\Z\) with odd~\(s\) \textup(finite coefficients\textup);
  \item \(H=S^{-1}\Z/\Z\) if~\(S\) contains only odd numbers \textup(colocalisation\textup).
  \end{enumerate}
\end{thm}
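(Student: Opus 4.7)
The plan is to reduce all three cases to a uniform argument: apply the appropriate \(H\)-coefficient construction termwise to the Schick long exact sequence~\eqref{eq:real_complex_sequence}, then split the resulting short exact sequences using the identity \(\delta\circ\beta\circ c = 2\cdot\id\) from \cite{Schick:Real_complex}*{Lemma 3.9}, where~\(\beta\) denotes complex Bott periodicity. For case~(1), flatness of \(S^{-1}\Z\) over~\(\Z\) makes the transfer automatic: tensor each term with \(S^{-1}\Z\). For cases~(2) and~(3), I would view \(\KKO^\Gamma_*(A,\blank)\) and \(\KK^\Gamma_*(A_\C,(\blank)_\C)\) as homological functors on \(\KKcat^\Gamma\) and apply the finite-coefficient construction \(F\mapsto F(\blank;s)\) of Section~\ref{sec:finite_homology} or the colocalisation \(F\mapsto F(\blank;S^{-1}\Z/\Z)\) of Section~\ref{sec:colocalisation} termwise. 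Naturality of \(\chi,c,\delta\) in~\(B\) makes them descend to transformations \(\chi_H,c_H,\delta_H\), and Remark~\ref{rem:cone_functor_automatic}, together with the fact that \(\blank\otimes_\R\C\) is a tensor-triangulated functor, ensures complexification commutes with the natural cones used to form these theories.

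Next, I would check that multiplication by~\(2\) acts invertibly on \(\KKO^\Gamma_n(A,B;H)\) and \(\KK^\Gamma_n(A_\C,B_\C;H)\). This is immediate for~(1) since \(\nicefrac12\in H\); for~(2) with odd~\(s\) we have \(\gcd(2,s)=1\) and the groups are \(\Z/s\)-modules by \cite{Schochet:Top4}*{Proposition~2.4}; for~(3), one verifies directly that~\(2\) acts invertibly on \(S^{-1}\Z/\Z\) when \(S\) contains only odd numbers, hence on every \(\Tor_*^\Z(\blank,S^{-1}\Z/\Z)\), hence by Proposition~\ref{pro:torsion_Tor} on the colocalised groups. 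Combined with Schick's identity \(2\chi=0\), this forces \(\chi_H=0\), so the \(H\)-coefficient version of~\eqref{eq:real_complex_sequence} breaks into natural short exact sequences
\[
0 \to \KKO^\Gamma_n(A,B;H) \xrightarrow{c_H} \KK^\Gamma_n(A_\C,B_\C;H) \xrightarrow{\delta_H} \KKO^\Gamma_{n-2}(A,B;H) \to 0.
\]
The map \(\tfrac12(\beta\circ c_H)\colon \KKO^\Gamma_{n-2}(A,B;H)\to \KK^\Gamma_n(A_\C,B_\C;H)\) then furnishes a natural section of~\(\delta_H\) by the Schick identity, producing the desired direct-sum decomposition.

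The main obstacle I anticipate is establishing exactness of the \(H\)-coefficient sequence in cases~(2) and~(3): the constructions of Sections~\ref{sec:colocalisation}--\ref{sec:finite_homology} naturally produce long exact sequences from exact triangles, not directly from long exact sequences of natural transformations. The cleanest fix is to identify~\eqref{eq:real_complex_sequence} with the long exact sequence associated to a single exact triangle in \(\KKcat^\Gamma\) (implicit in Schick's construction, with the generator \(\eta\in\KKO^\Gamma_1(\R,\R)\) appearing as a connecting map): then applying the triangulated endofunctor \(\blank\otimes_\R C_s[-2]\) yields the corresponding \(\Z/s\)-coefficient exact triangle whose long exact sequence is the required one, and the filtered colimit of Proposition~\ref{pro:torsion_from_finite} handles case~(3). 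A Five-Lemma chase pairing~\eqref{eq:real_complex_sequence} with its \(s\cdot\id\)-shifted copy is an alternative but less conceptual route.
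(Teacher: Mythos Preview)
Your proposal is correct and follows essentially the same route as the paper: obtain an \(H\)-coefficient version of the Schick sequence, observe that \(2\) acts invertibly so \(\chi_H=0\), and split the resulting short exact sequence via \(\delta\circ\beta\circ c=2\cdot\id\). The only cosmetic differences are that the paper verifies invertibility of~\(2\) in case~(2) via Lemma~\ref{lem:square_annihilates_finite_coef} (so only \(s^2\) annihilates, which suffices) rather than Schochet's sharper \(\Z/s\)-module statement, and in case~(3) via the filtered colimit of case~(2) rather than your \(\Tor\)-extension argument; your worry about exactness in~(2) is resolved exactly by your own ``cleanest fix'' --- the paper simply replaces~\(B\) by \(B\otimes\R_s\), which realises the \(\Z/s\)-coefficient sequence as an instance of~\eqref{eq:real_complex_sequence} itself.
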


\begin{proof}
  Tensoring~\eqref{eq:real_complex_sequence} with \(S^{-1}\Z\), we get an analogous long exact sequence for the \(S\)\nb-rational theories.  By assumption on~\(S\), \(2\)~is invertible on the \(S\)\nb-rational theory.  Hence exactly the same argument as for \(\Z[\nicefrac12]\) works.  Studying \(\KK\)-theories with coefficients in \(\Z/s\) amounts to replacing~\(B\) by \(B_{s} \defeq B\otimes \R_{s}\), an operation that commutes with complexification and the various other constructions needed for the exact sequence~\eqref{eq:real_complex_sequence}.  Hence there is an analogous exact sequence with coefficients~\(\Z/s\).  Since~\(s^2\) annihilates the theory with coefficients~\(\Z/s\) by Lemma~\ref{lem:square_annihilates_finite_coef}, the element~\(s^2+1\) acts as the identity  on~\(\Z/s\), and~\(2\)~is invertible in~\(\Z/s\) for odd~\(s\).  This proves the second case.  Finally, we write the \(S\)\nb-torsion theory as a filtered colimit of finite coefficient theories as in Proposition~\ref{pro:torsion_from_finite}.  Only coefficients~\(\Z/s\) with odd~\(s\) appear here by assumption.  Hence multiplication by~\(2\) is invertible on the \(S\)\nb-torsion theory if~\(S\) contains no even numbers.  This establishes the third case.
\end{proof}

\begin{bibdiv}
  \begin{biblist}
\bib{Atiyah-Macdonald:Commutative}{book}{
  author={Atiyah, Michael F.},
  author={Macdonald, Ian G.},
  title={Introduction to commutative algebra},
  publisher={Addison-Wesley Publishing Co., Reading, Mass.-London-Don Mills, Ont.},
  date={1969},
  pages={ix+128},
  review={\MRref {0242802}{39\,\#4129}},
}

\bib{Balmer:Spectra_spectra}{article}{
  author={Balmer, Paul},
  title={Spectra, spectra, spectra},
  status={preprint},
  date={2009},
  note={available at \url {http://www.math.ucla.edu/~balmer/research/Pubfile/SSS.pdf}},
}

\bib{Blackadar:K-theory}{book}{
  author={Blackadar, Bruce},
  title={\(K\)\nobreakdash -theory for operator algebras},
  series={Mathematical Sciences Research Institute Publications},
  volume={5},
  edition={2},
  publisher={Cambridge University Press},
  place={Cambridge},
  date={1998},
  pages={xx+300},
  isbn={0-521-63532-2},
  review={\MRref {1656031}{99g:46104}},
}

\bib{Browder:Algebraic_K_coefficients}{article}{
  author={Browder, William},
  title={Algebraic $K$\nobreakdash -theory with coefficients~$\mathbb Z/p$},
  conference={ title={Geometric applications of homotopy theory (Proc. Conf., Evanston, Ill., 1977), I}, },
  book={ series={Lecture Notes in Math.}, volume={657}, publisher={Springer}, place={Berlin}, },
  date={1978},
  pages={40--84},
  review={\MRref {513541}{80b:18011}},
}

\bib{Cuntz-Meyer-Rosenberg}{book}{
  author={Cuntz, Joachim},
  author={Meyer, Ralf},
  author={Rosenberg, Jonathan M.},
  title={Topological and bivariant \(K\)\nobreakdash -theory},
  series={Oberwolfach Seminars},
  volume={36},
  publisher={Birkh\"auser Verlag},
  place={Basel},
  date={2007},
  pages={xii+262},
  isbn={978-3-7643-8398-5},
  review={\MRref {2340673}{2008j:19001}},
}

\bib{dellAmbroglio:Tensor_triangular}{article}{
  author={dell'Ambroglio, Ivo},
  title={Tensor triangular geometry and $KK$-theory},
  status={preprint},
  date={2009},
  note={available at \url {http://www.math.ethz.ch/u/ambrogio/kkGarticle.pdf}},
}

\bib{Gabriel-Zisman:Fractions}{book}{
  author={Gabriel, Peter},
  author={Zisman, Michel},
  title={Calculus of fractions and homotopy theory},
  series={Ergebnisse der Mathematik und ihrer Grenzgebiete},
  volume={35},
  publisher={Springer-Verlag},
  place={New York},
  date={1967},
  pages={x+168},
  review={\MRref {0210125}{35\,\#1019}},
}

\bib{Inassaridze-Kandelaki-Meyer:Localisation_colocalisation}{article}{
  author={Inassaridze, Hvedri},
  author={Kandelaki, Tamaz},
  author={Meyer, Ralf},
  title={Localisation and colocalisation of triangulated categories at thick subcategories},
  note={\arxiv {0912.2088}},
  date={2009},
}

\bib{Karoubi:Descent}{article}{
  author={Karoubi, Max},
  title={A descent theorem in topological $K$\nobreakdash -theory},
  journal={$K$\nobreakdash -Theory},
  volume={24},
  date={2001},
  number={2},
  pages={109--114},
  issn={0920-3036},
  review={\MRref {1869624}{2002m:19005}},
  doi={10.1023/A:1012785711074},
}

\bib{Karoubi-Lambre:Classes_caracteristiques}{article}{
  author={Karoubi, Max},
  author={Lambre, Thierry},
  title={Quelques classes caract\'eristiques en th\'eorie des nombres},
  language={French, with English summary},
  journal={J. Reine Angew. Math.},
  volume={543},
  date={2002},
  pages={169--186},
  issn={0075-4102},
  review={\MRref {1887882}{2003f:11178}},
}

\bib{Kasparov:Operator_K}{article}{
  author={Kasparov, Gennadi G.},
  title={The operator \(K\)\nobreakdash -functor and extensions of \(C^*\)\nobreakdash -algebras},
  language={Russian},
  journal={Izv. Akad. Nauk SSSR Ser. Mat.},
  volume={44},
  date={1980},
  number={3},
  pages={571--636, 719},
  issn={0373-2436},
  translation={ language={English}, journal={Math. USSR-Izv.}, volume={16}, date={1981}, number={3}, pages={513--572 (1981)}, },
  review={\MRref {582160}{81m:58075}},
}

\bib{Meyer-Nest:BC}{article}{
  author={Meyer, Ralf},
  author={Nest, Ryszard},
  title={The Baum--Connes conjecture via localisation of categories},
  journal={Topology},
  volume={45},
  date={2006},
  number={2},
  pages={209--259},
  issn={0040-9383},
  review={\MRref {2193334}{2006k:19013}},
}

\bib{Schick:Real_complex}{article}{
  author={Schick, Thomas},
  title={Real versus complex $K$\nobreakdash -theory using Kasparov's bivariant $KK$-theory},
  journal={Algebr. Geom. Topol.},
  volume={4},
  date={2004},
  pages={333--346},
  issn={1472-2747},
  review={\MRref {2077669}{2005f:19007}},
}

\bib{Schochet:Top4}{article}{
  author={Schochet, Claude},
  title={Topological methods for $C^*$\nobreakdash -algebras. IV. Mod~$p$ homology},
  journal={Pacific J. Math.},
  volume={114},
  date={1984},
  number={2},
  pages={447--468},
  issn={0030-8730},
  review={\MRref {757511}{86g:46103}},
}

\bib{Segal:Representation_ring}{article}{
  author={Segal, Graeme},
  title={The representation ring of a compact Lie group},
  journal={Inst. Hautes \'Etudes Sci. Publ. Math.},
  number={34},
  date={1968},
  pages={113--128},
  issn={0073-8301},
  review={\MRref {0248277}{40\,\#1529}},
}
  \end{biblist}
\end{bibdiv}
\end{document}